\documentclass[11pt]{article}

\usepackage{amssymb}
\usepackage{graphicx}
\usepackage{color}
\usepackage{amsmath,amsthm,amsfonts,epsfig,setspace}
\numberwithin{equation}{section}

\newcommand{\ds}{\displaystyle}
\def\nm{\noalign{\medskip}}
\newtheorem{thm}{Theorem}[section]
\newtheorem{rmk}{Remark}[section]
\newtheorem{cor}{Corollary}[section]
\newtheorem{definition}{Definition} [section]

\newtheorem{prop}{Proposition}[section]

\newtheorem{cond}{Condition}

\setlength\topmargin{-1cm} \setlength\textheight{220mm}
\setlength\oddsidemargin{0mm}
\setlength\evensidemargin\oddsidemargin \setlength\textwidth{160mm}
\setlength\baselineskip{18pt}

 \def\p{\partial}
\def \Vh0{\stackrel{\circ}{V}_h}

\def\l{\label}  \def\f{\frac} \def\df{\dfrac} 

   \def\eps{\varepsilon}

\def\l|{\left|}
\def\r|{\right|}

\newcommand{\R}{\mathbb{R}}

\newcommand{\lc}
{\mathrel{\raise2pt\hbox{${\mathop<\limits_{\raise1pt\hbox
{\mbox{$\sim$}}}}$}}}

\newcommand{\gc}
{\mathrel{\raise2pt\hbox{${\mathop>\limits_{\raise1pt\hbox{\mbox{$\sim$}}}}$}}}

\newcommand{\ec}
{\mathrel{\raise2pt\hbox{${\mathop=\limits_{\raise1pt\hbox{\mbox{$\sim$}}}}$}}}

\def\be{\begin{equation}} \def\ee{\end{equation}}

\def\bea{\begin{eqnarray}}  \def\eea{\end{eqnarray}}

\def\beas{\begin{eqnarray*}} \def\eeas{\end{eqnarray*}}

\def\bn{\begin{enumerate}} \def\en{\end{enumerate}}

\def\bd{\begin{description}} \def\ed{\end{description}}

\title{Reconstructing fine details of small objects by using plasmonic spectroscopic data}
%\thanks{\footnotesize This work was supported  by the ERC Advanced Grant Project MULTIMOD--267184.}}
\date{}

\author{
Habib Ammari\thanks{\footnotesize Department of Mathematics, 
ETH Z\"urich, 
R\"amistrasse 101, CH-8092 Z\"urich, Switzerland (habib.ammari@math.ethz.ch, sanghyeon.yu@math.ethz.ch). }
\and   Matias Ruiz\thanks{\footnotesize Department of Mathematics and Applications,
Ecole Normale Sup\'erieure, 45 Rue d'Ulm, 75005 Paris, France
(matias.ruiz@ens.fr).}
\and
Sanghyeon Yu \footnotemark[2]
\and
Hai Zhang\thanks{\footnotesize Department of Mathematics, HKUST, Clear Water Bay, Kowloon, Hong Kong (haizhang@ust.hk). The work of Hai Zhang was supported by HK RGC grant ECS 26301016 and startup fund R9355 from HKUST. }
}

\begin{document}
\maketitle

\begin{abstract}
This paper is concerned with the inverse problem of reconstructing a small object from far field measurements. The inverse problem is severally ill-posed because of the diffraction limit and low signal to noise ratio. We propose a novel methodology to solve this type of inverse problems based on an idea from plasmonic sensing. By using the field interaction with a known plasmonic particle, the fine detail information of the small object can be encoded into the shift of the resonant frequencies of the two particle system in the far field. In the intermediate interaction regime, we show that this information is exactly the generalized polarization tensors associated with the small object, from which one can perform the reconstruction. Our theoretical findings are supplemented by a variety of numerical results.  The results in the paper also provide a mathematical foundation for plasmonic sensing.   

\end{abstract}

\medskip

\bigskip

\noindent {\footnotesize Mathematics Subject Classification
(MSC2000): 35R30, 35C20.}

\noindent {\footnotesize Keywords: plasmonic sensing, superresolutoion, far-field measurement, generalized polarization tensors}

% \tableofcontents
\section{Introduction}
The inverse problem of reconstructing fine details of small objects by using far-field measurements is severally ill-posed. There are two main reasons for this. The first reason is the diffraction limit. When illuminated by an incident wave with wavelength $\lambda$, the scattered field excited from the object which carries information on the scale smaller than $\lambda$ are confined near the object itself and only those with information on the scale greater than $\lambda$ can propagate into the far-field and be measured. As a result, from the far-field measurement one can only retrieve information about the object on the scale greater than $\lambda$. Especially in the case when the object is small (with a size smaller than $\lambda$), one can only obtain very few information. The second reason is the low signal to noise ratio. We know that small objects scatter "weakly".  This results in a very weak measurement signal in the far-field. In the presence of measurement noise, one has low signal to noise ratio and hence poor reconstruction. 

In this paper, we propose a new methodology to overcome the ill-posedness of this inverse problem. Our method is motivated by plasmonic bio-sensing. The key is to use a plasmonic particle to interact with the object and to propagate its near field information into far-field in term of shifts of plasmonic resonant frequencies. 

Plasmonic particles are metallic particles with size in the range from several nanometers to hundreds of nanometers. Under the illumination of electromagnetic field in the infrared and visible regime, the free-electrons in the particle may be strongly coupled to the electromagnetic field for certain frequencies resulting in strong scattering and enhancement of local fields. This phenomenon is called surface plasmon resonance \cite{SC10, Gri12} and the associated frequencies are called plasmonic resonant frequencies. Plasmonic resonance is extensively studied in the literature. A driving motivation is the use of plasmonic particles as the labels for sensing in molecular biology; see the review article \cite{anker} and the references therein. Besides sensing, there are other applications such as thermotherapy where plasmonic particles act as nanometric heat-generators that can be activated remotely by external electromagnetic fields \cite{baffou2010mapping}. We refer to \cite{SC10} and the references therein for these applications. We also refer to \cite{matias, pierre, kang1, hyeonbae, plasmon1, miller, plasmon4, yang}
for other related works of interest. 

The plasmon resonant frequency is one of the most important characterization of a plasmonic particle. It depends not only on the electromagnetic properties of the particle and its size and shape, but also on the electromagnetic properties of the environment \cite{matias, kelly, link}. It is the last property which enables the sensing application of plasmonic particles. Motivated by \cite{anker}, we perform in this paper a rigorous quantitative analysis for the sensing application. We show
that plasmonic resonance can be used to reconstruct fine details of small objects. We also remark that plasmonic resonance can also be used to identify the shape of the plasmonic particle itself \cite{yu}.

The methodology we propose is closely related to super-resolution in imaging. Super-resolution is about the separation of point sources.  In near field microscopy, the basic idea is to obtain the near field of sources which contains high resolution information.  This is made possible by propagating the near field information into the far field through certain near field interaction mechanism. In a recent series of papers \cite{hai, hai2, hai3}, we have shown mathematically how to use subwavelength resonators to achieve super-resolution. The idea is to obtain the near field information through the subwavelength resonant modes which can be excited by the sources with the right frequency and which can propagate into the far-field. In this paper,  we are interested in reconstructing the fine details of small objects in comparison to their positions and separability which are the focus of super-resolution.  The idea is similar. The near field information of the object is obtained from the near field interaction of the object and the plasmonic particle. 

In this paper, we consider the  system composed of a known plasmonic particle and the unknown object
whose geometry and electromagnetic properties are the quantities of interest. Under the illumination of incident waves with frequencies in certain range, we measure the frequencies where the peaks in the scattering field occur. These are the resonant frequencies or spectroscopic data of the system. By varying the relative position of the  particles, we obtain different resonant frequencies  due to the varying interactions between the  particles. We assume that the unknown particle is small compared to the plasmonic particle. In the intermediate regime when the distance of the two particles is comparable to the size of the plasmonic particle, we show that the presence of the small unknown particle can be viewed as a small perturbation to the homogeneous environment of the plasmonic particle. As a result, it induces a small shift to the plasmonic resonant frequencies of the plasmonic particle, which can be read from the observed spectroscopic data. By using rigorous asymptotic analysis, we obtain analytical formula for the shift which shows that the shift is determined by the generalized polarization tensors \cite{book2} of the unknown object. Therefore, from the far-field measurement of the shift of resonant frequencies, we can reconstruct the fine information of the object by using its generalized polarization tensors. 

We note that plasmonic resonant frequencies also depend on the size of the plasmonic particle \cite{matias, matias2, kelly, tocho}. In this paper, for the sake of simplicity, we consider the quasi-static approximation 
for the interaction between the electromagnetic field and the system of the two particles. Thus, we shall use the conductivity equation instead of the Helmholtz equation and the Maxwell equations. These more practical models will be analyzed in future works. In addition, we only consider the intermediate interaction regime in the paper, the strong interaction regime when the object is close to the plasmonic particle is also very interesting and will be reported in future works. 

This paper is organized in the following way. 
In Section \ref{sec-prelim}, we provide basic results on layer potentials and then explain the concept of plasmonic resonances and the (contracted) generalized polarization tensors.
In Section \ref{sec-forward}, we consider the forward scattering problem of the incident field interaction with a system composed of an ordinary particle and a plasmonic particle. We derive the asymptotic of the scattered field in the case of intermediate regime. In Section \ref{sec-inverse}, we consider the inverse problem of reconstructing the geometry of the ordinary particle. This is done by  constructing the generalized polarization tensors of the particles through the resonance shift  induced to the plasmonic particle.  In Section \ref{sec-numeric}, we provide numerical examples to justify our theoretical results. The paper ends with some concluding remarks.

%\section{Setup of the inverse problem}

\section{Preliminaries}\label{sec-prelim}

\subsection{Layer potentials and spectral theory of the NP operator}

We denote by $G(x,y)$ the Green function for the Laplacian in the free space. In $\R^2$, we have $$G(x,y) = \f{1}{2\pi}\log|x-y|.$$

Consider a domain $D$ with $\mathcal{C}^{1,\eta}$ boundary in $\mathbb{R}^2$ for $\eta>0$. Let $\nu(x)$ denote the outward normal at $ x \in \partial D$. Suppose that $D$ contains the origin $0$. 
%For $\alpha=(\alpha_1,\alpha_2) \in \mathbb{N}^2,$ we denote by $\partial_\alpha = \partial_1^{\alpha_1}  \partial_d^{\alpha_2}$ and $\alpha!=\alpha_1! \alpha_2!$. 

The single layer potential $\mathcal{S}_{D}$ is 
given by 
$$
\mathcal{S}_{D} [\varphi](x) =\int_{\p D }  G(x,y)\varphi(y) d\sigma(y) ,   \quad x \in \mathbb{R}^2.
$$
The Neumann-Poincar\'{e} (NP) operator $\mathcal{K}_{D}^*$ associated with $D$  is defined as follows:
$$
\mathcal{K}_{D}^* [\varphi](x) = \int_{\p D }  \frac{ \p G }{\p\nu(x)} (x,y) \varphi(y) d\sigma(y) ,   \quad x \in \p D.
$$
The following jump relations hold:
\begin{align}
{\mathcal{S}_D[\varphi]}\big|_+ &= {\mathcal{S}_D[\varphi]}\big|_-,\label{eqn_jump_single1}
\\
\frac{\p\mathcal{S}_D[\varphi]}{\p\nu}\Big|_{\pm} &= (\pm\frac{1}{2} I +\mathcal{K}_D^*)[\varphi]. \label{eqn_jump_single2}
\end{align}

Let $H^{1/2}(\p D)$ be the usual Sobolev space and let $H^{-1/2}(\p D)$ be its dual space with respect to the $L^2$-pairing $(\cdot,\cdot)_{-\frac{1}{2},\frac{1}{2}}$. We denote by $H^{-1/2}_0(\p D)$ the collection of all $\varphi\in H^{-1/2}(\p D)$ such that $(\varphi,1)_{-\frac{1}{2},\frac{1}{2}}=0$.

The NP operator is bounded on $H^{-1/2}(\p D)$ and maps $H^{-1/2}(\p D)$ into itself.
It can be shown that the operator $\lambda I - \mathcal{K}_D^*: L^2(\partial D)
\rightarrow L^2(\partial D)$ is invertible for any $|\lambda| > 1/2$.
Although the NP operator is not self-adjoint on $L^2(\p D)$, it can be symmetrized on $H_0^{-1/2}(\p D)$ by using a new inner product. 
Let $\mathcal{H}^*(\p D)$ be the space $H^{-1/2}_0(\p D)$ equipped with the inner product $(\cdot,\cdot)_{\mathcal{H}^*(\p D)}$ defined by
$$
(\varphi,\psi)_{\mathcal{H}^*(\p D)} =  -(\varphi,\mathcal{S}_D[\psi])_{-\frac{1}{2},\frac{1}{2}},
$$
for $\varphi,\psi\in H^{-1/2}(\p D)$.
Then using the Plemelj's symmetrization principle,
$$
\mathcal{S}_D\mathcal{K}_D^*=\mathcal{K}_D\mathcal{S}_D,
$$
it can be shown that the NP operator $\mathcal{K}_D^*$ is self-adjoint with respect to $(\cdot,\cdot)_{\mathcal{H}^*(\p D)}$.
 Furthermore, $\mathcal{K}_D^*$ is compact, so its spectrum is discrete and contained in $]-1/2, 1/2]$; see for instance \cite{book2} for more details. 
Therefore, the NP operator $\mathcal{K}_D^*$ admits the following spectral decomposition:
for $\varphi\in\mathcal{H}^*$,
\be\label{spectral_decomposition_Kstar}
\mathcal{K}_D^*[\varphi] = \sum_{j=1}^\infty \lambda_j (\varphi,\varphi_j)_{\mathcal{H}^*} \varphi_j,
\ee 
where $\lambda_j$ are the eigenvalues of $\mathcal{K}_D^*$ and $\varphi_j$ are their associated eigenfunctions. Note that $|\lambda_j|<1/2$ for all $j\geq 1$.

\subsection{Plasmonic resonance}\label{subsec-plasmonic}

We are interested in the regime when a plasmonic resonance  occurs,  so the wavelength of the incident field should be much greater than the size of the plasmonic particle.  To further simplify the analysis and better illustrate the main idea of our methodology, we use the quasi-static approximation (by assuming the incident wavelength to be infinity) to model the interaction.

Given a harmonic function $H$ in $\mathbb{R}^2$, which represents an incident field, we consider the following transmission problem:
\be
    \begin{cases}
        \nabla \cdot (\eps \nabla u) = 0 &\text{ in }\; \mathbb{R}^2, \\[1.5mm]
         u - H = O(|x|^{-1}) &\text{ as }\; |x| \rightarrow \infty,
    \end{cases}
    \label{transmission}
\ee
where $\eps  = \eps_D \chi(D) +  \eps_m\chi(\mathbb{R}^2 \backslash \overline{D})$, and $\chi(D)$ and $\chi(\mathbb{R}^2 \backslash \overline{D})$ are  the characteristic functions of $D$ and $\mathbb{R}^2 \backslash \overline{D}$, respectively.  From \cite{book2}, we have
\be
    u = H+ \mathcal{S}_{D} [\varphi]  \, ,
    \label{scattered}
\ee
where $\varphi$ satisfies
\be\label{phi_int_equation}
 (\lambda I - \mathcal{K}_D^*)[\varphi] = \frac{\partial H}{\partial \nu}\Big|_{\p D}.
\ee
Here, $\lambda$ is given by
 \be \label{deflambda} 
\lambda= \frac{\eps_D+\eps_m}{2(\eps_D-\eps_m)}.\ee  

Contrary to ordinary dielectric particles, the permittivities of plasmonic materials, such as noble metals, have negative real parts. In fact, the permittivity $\eps_D$ depends on the operating frequency $\omega$ and can be modeled by the Drude's model given by
\be
\eps_D = \eps_D(\omega) = 1-\frac{\omega_p^2}{\omega(\omega+i\gamma)},
\ee
where $\omega_p>0$ is called the plasma frequency and $\gamma>0$ is the damping parameter. Since the parameter $\gamma$ is typically very small,
the permittivity $\eps_D(\omega)$ has a small imaginary part.

Now we discuss the plasmonic resonances.
By applying the spectral decomposition \eqref{spectral_decomposition_Kstar} of $\mathcal{K}_D^*$ to the integral equation \eqref{phi_int_equation}, the density $\varphi$ becomes
\be\label{varphi_spectral_decomposition}
\varphi = \sum_{j=1}^\infty \frac{(\frac{\p H}{\p\nu},\varphi_j)_{\mathcal{H}^*(\p D)}}{\lambda_D- \lambda_j} \varphi_j.
\ee
Recall that $\lambda_j$ are eigenvalues $\mathcal{K}_D^*$ and they satisfy $|\lambda_j|<1/2$. 
For $\omega<\omega_p$, $\mbox{Re}\{\eps_D(\omega)\}$ can take  negative values. Then it holds that $|\mbox{Re}\{\lambda(\omega)\}|<1/2$. So, for a certain frequency $\omega_j$, the value of $\lambda(\omega_j)$ can be very close to an eigenvalue $\lambda_j$ of the NP operator. Then, in \eqref{varphi_spectral_decomposition}, the eigenfunction $\varphi_j$ will be amplified provided that $(\frac{\p H}{\p\nu},\varphi_j)_{\mathcal{H}^*(\p D)}$ is non-zero. As a result, the scattered field $u-u^i$ will show a resonant behavior. This phenomenon is called the plasmonic resonance.

When $D$ is an ellipse, we can compute the spectral properties of the NP operator $\mathcal{K}_D^*$ explicitly.
Let $D$ be an ellipse given by
\be\label{def_ellipse}
D = \Big\{(x,y)\in\mathbb{R}^2: \frac{x^2}{a^2} + \frac{y^2}{b^2} \leq 1\Big\},
\ee
for some constants $a,b$ with $a<b$.
 Then it is known that the eigenvalues of the NP operator associated with the ellipse $D$ on $\mathcal{H}^*$ are 
$$
\pm \frac{1}{2}\Big(\frac{b-a}{b+a}\Big)^{j}, \quad j=1,2,3,\cdots.
$$

\subsection{Contracted generalized polarization tensors}\label{subsec-CGPT}

Here we explain the concept of the generalized polarization tensors (GPTs).
The scattered field $u-u^i$ has the following far-field behavior  \cite[p. 77] {book2}
\be
    (u - u^i)(x) =  \sum_{|\alpha|, |\beta| \geq 1 } \f{1}{\alpha ! \beta!} \partial^\alpha u^i(0) M_{\alpha \beta}(\lambda, D) \partial^\beta G(x), 
    \quad |x| \rightarrow + \infty,
    \label{scattered2}
\ee
where $M_{\alpha \beta}(\lambda,D)$ is given by
$$M_{\alpha \beta}(\lambda, D) : =  \int_{\partial D} y^\beta (\lambda I - \mathcal{K}_D^*)^{-1} [\frac{\partial x^\alpha}{\partial \nu}](y)\, d\sigma(y), \qquad \alpha, \beta \in \mathbb{N}^d.$$
Here, the coefficient $M_{\alpha \beta}(\lambda, D)$ is called the {\it generalized polarization tensor} \cite{book2}.

For a positive integer
$m$, let $P_m(x)$ be the complex-valued polynomial
\begin{equation}
P_m(x) = (x_1 + ix_2)^m = r^m \cos m\theta +ir^m \sin m\theta, \label{eq:Pdef}
\end{equation}
where we have used the polar coordinates $x = re^{i\theta}$.

We define the {\it contracted generalized polarization tensors }(CGPTs) to be the
following linear combinations of generalized polarization tensors using the polynomials in
\eqref{eq:Pdef}:
\begin{align*}
M^{cc}_{m,n}(\lambda,D) = \int_{\partial D} \mbox{Re} \{ P_n\}  (\lambda I - \mathcal{K}_D^*)^{-1} [\frac{\partial \,\mbox{Re} \{ P_m\}}{\partial \nu}]\, d\sigma,  \\
M^{cs}_{m,n}(\lambda,D) = \int_{\partial D} \mbox{Im} \{ P_n\}  (\lambda I - \mathcal{K}_D^*)^{-1} [\frac{\partial \,\mbox{Re} \{ P_m\}}{\partial \nu}]\, d\sigma,\\
M^{sc}_{m,n}(\lambda,D) = \int_{\partial D} \mbox{Re} \{ P_n\}  (\lambda I - \mathcal{K}_D^*)^{-1} [\frac{\partial \,\mbox{Im} \{ P_m\}}{\partial \nu}]\, d\sigma,\\
M^{ss}_{m,n}(\lambda,D) = \int_{\partial D} \mbox{Im} \{ P_n\}  (\lambda I - \mathcal{K}_D^*)^{-1} [\frac{\partial \,\mbox{Im} \{ P_m\}}{\partial \nu}]\, d\sigma. 
\end{align*}
We refer to \cite{book2} for further details.

For convenience, we introduce the following notation. We denote
\beas
M_{m,n}(\lambda,D) &=& \left( \begin{array}{c c}
M_{m,n}^{cc}(\lambda,D) & M_{m,n}^{cs}(\lambda,D)\\
M_{m,n}^{sc}(\lambda,D) & M_{m,n}^{ss}(\lambda,D)
\end{array}\right).
\eeas

When $m=n=1$, the matrix $M(\lambda,D):=M_{1,1}(\lambda,D)$ is called the {\it first order polarization tensor}.
Specifically, we have
$$
M(\lambda,D)_{lm} =  \int_{\partial D} y_j (\lambda I - \mathcal{K}_D^*)^{-1} [\nu_i](y)\, d\sigma(y), \quad l,m=1,2.
$$
Since, from \eqref{scattered2}, we have
$$
(u-u^i)(x) = \frac{\nabla u^i \cdot M(\lambda,D) x}{|x|^2} + O(|x|^{-2}), \quad
\mbox{as }|x|\rightarrow \infty,
$$
the first order polarization tensor $M(\lambda,D)$ determines the dominant term in the far-field expansion of the scattered field $u-u^i$.

To see the plasmonic resonant behavior of the far field, it is helpful to represent $M(\lambda,D)$ in a spectral form.
By the spectral decomposition \eqref{spectral_decomposition_Kstar}, we obtain that
$$
M(\lambda,D)_{lm} = \sum_{j=1}^\infty \frac{(y_m,\varphi_j)_{-\frac{1}{2},\frac{1}{2}}(\varphi_j,\nu_l)_{\mathcal{H}^*(\p D)}}{\lambda-\lambda_j}.
$$
If $D$ is the ellipse given by \eqref{def_ellipse}, then we have the explicit formula for $M(\lambda,D)$ 
\begin{align} \label{mellips}
M(\lambda,D)=
\begin{pmatrix}
 \frac{\pi a b}{ \lambda - \frac{1}{2}\frac{a-b}{a+b} } & 0
 \\
 0 & \frac{\pi a b}{ \lambda + \frac{1}{2}\frac{a-b}{a+b} }
\end{pmatrix}.
\end{align}
Formula \eqref{mellips} indicates that, in the far field region, the plasmonic resonance occurs only if $\lambda$ is close to $\frac{1}{2}\frac{a-b}{a+b}$ or $-\frac{1}{2}\frac{a-b}{a+b}$.

%Therefore, in terms of the complex CGPTs,
%$\Mcc^{(1)}_{mn}(\lambda, D)$ and $\Mcc^{(2)}_{mn}(\lambda, D)$,
%given by \eqnref{contracted} and \eqnref{contracted2} we have
%\begin{equation}
%\begin{aligned}
%\Mcc^{(1)}_{mn}(\lambda, D) &= (M^{cc}_{mn} - M^{ss}_{mn}) +
%i(M^{cs}_{mn} + M^{sc}_{mn})\;,\\
%\Mcc^{(2)}_{mn}(\lambda, D) &= (M^{cc}_{mn} + M^{ss}_{mn}) +
%i(M^{cs}_{mn} - M^{sc}_{mn})\;.
%\end{aligned}
%\label{eq:Mccdef}
%\end{equation}

\section{The forward problem} \label{sec-forward}
We consider a system composed of a small ordinary particle and a plasmonic particle embedded in a homogeneous medium; see Figure \ref{fig-two_particles sensing}.  
The ordinary particle and the plasmonic particle occupy a bounded and simply connected domain $D_1\subset\mathbb{R}^2$ and $D_2\subset\mathbb{R}^2$ of class $\mathcal{C}^{1,\alpha}$ for some $0<\alpha<1$, respectively. We denote the permittivity of the ordinary particle $D_1$ (or the plasmonic particle $D_2$) by $\eps_1$ (or $\eps_2$), respectively.
The permittivity of the background medium is denoted by $\varepsilon_m$.
%We assume that $\eps_1$ is a positive constant and that $\eps_2$ is a complex number depending on the working frequency. We further assume that  
%$\Re \eps_2 <0, \Im \eps_2 >0$ and $\eps_m, \eps_1, \eps_2$ are of order one. 
%%These values may depend on  $\om$. 
%Assume that $\Re \eps_1 >0, \Im \eps_1 >0$ and 
%Throughout this paper, we assume that $\eps_m$, $\mu_m$, $\varepsilon_1$ and $\mu_1$ are real and strictly positive and that $\Re k_2 <0$ and $\Im k_2 >0$.
%We assume that both particles are much smaller than the wavelength of the incoming wave. 
In other words, the permittivity distribution $\eps$ is given by
$$
\eps:=
\eps_1\chi(D_1)  + \eps_2\chi(D_2) +  \eps_m\chi(\R^2\backslash( \overline{D_1\cup D_2})).
$$
As in Subsection \ref{subsec-plasmonic}, the permittivity $\eps_2$ of the plasmonic particle depends on the operating frequency and is modeled as
$$
\eps_2=\eps_2(\omega) = 1-\frac{\omega_p^2}{\omega(\omega+i\gamma)}.
$$
We assume the following condition on the size of the particles $D_1$ and $D_2$.
\begin{cond} \label{condition0 biosensing} 
 The plasmonic particle $D_2$ has size of order one and is centered at a position that we denote by $z$;  the ordinary particle $D_1$ has size of order $\delta \ll 1$ and is centered at the origin. Specifically, we write $D_1= \delta B$, where the domain $B$ has size of order one.  
\end{cond}

The total electric potential $u$ satisfies the following equation:
\be \label{eq-Helmholtz eq biosensing}
\begin{cases}
	\ds \nabla \cdot (\eps \nabla u)  = 0 &\quad \mbox{in } \R^2\backslash (\p D_1\cup \p D_2), \\
	\nm
	 u|_{+} = u|_{-}    &\quad \mbox{on } \partial D_1 \cup \partial D_2, \\
	\nm
	  \ds \eps_{m} \df{\p u}{\p \nu} \Big|_{+} =\eps_{1} \df{\p u}{\p \nu} \Big|_{-}  &\quad \mbox{on } \partial D_1,\\
	\nm
	  \ds \eps_{m} \df{\p u}{\p \nu} \Big|_{+} = \eps_{2} \df{\p u}{\p \nu} \Big|_{-}  &\quad \mbox{on } \partial D_2,\\
	\nm
	  (u-u^i)(x) = O(|x|^{-1}),   &\quad\mbox{as }|x|\rightarrow \infty,
\end{cases}
\ee
where $u^i(x)=d\cdot x$ is the incident potential with a constant vector $d\in\mathbb{R}^2$.

%We shall use layer potential techniques to solve the above equation. For the purpose, we first introduce some preliminaries. 

\begin{figure}[h!]
\begin{center}
\includegraphics[scale=0.7]{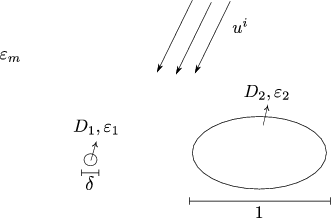}
\caption{ \label{fig-two_particles sensing} Scattering of an incident wave $u^{i}$ by a system of a plasmonic ($D_2$) - non plasmonic ($D_1$) particles.}
\end{center}
\end{figure}

\subsection{The Green function in the presence of a  small particle}

Let $G_{D_1}(\cdot,y)$ be the Green function  at the source point $y$  of a medium consisting of the particle $D_1$, which is embedded in the free space. For every $y\notin \overline{D_1}$, $G_{D_1}(\cdot,y)$ satisfies the following equation:
\be \label{eq-Helmholtz eq green function biosensing}
\begin{cases}
 	\ds \nabla \cdot \big(\eps_1\chi(D_1) +  \eps_m\chi(\R^2\backslash \overline{D_1})\big) \nabla u  = \delta_y &\quad \mbox{in } \R^2\backslash \p D_1, \\
 	\nm
 	 u|_{+} =u|_{-}     &\quad \mbox{on } \partial D_1, \\
 	\nm
 	  \ds \eps_{m} \df{\p u}{\p \nu} \bigg|_{+} = \eps_{1} \df{\p u}{\p \nu} \bigg|_{-}  &\quad \mbox{on } \partial D_1,\\
 	\nm
   u(x) = O(|x|^{-1}),   &\quad\mbox{as }|x|\rightarrow \infty.
\end{cases}
\ee

We look for a solution of the form:

\be \label{eq-representation green function sensing}
G_{D_1}(x,y) :=
\begin{array}{cc}
	G(x,y) + \mathcal{S}_{D_1} [\psi], & \quad x \in \R^2 \backslash \overline{D_1}
\end{array}.
\ee
Note that $G_{D_1}$ satisfies the second and fourth conditions in \eqref{eq-Helmholtz eq green function biosensing}.
From the third condition in \eqref{eq-Helmholtz eq green function biosensing} and the jump formula \eqref{eqn_jump_single2} for the single layer potential, the density $\psi$ must satisfy the following equation on $\p D_1$:
\be \label{Helm-syst green fcn biosensing}
\begin{array}{rcl}
	\eps_m\big(\df{1}{2}Id + \mathcal{K}_{D_1}^*\big)[\psi] +  \eps_1 \big(\df{1}{2}Id- \mathcal{K}_{D_1}^*\big)[\psi] = (\eps_1-\eps_m)\df{\p}{\p \nu}G(\cdot,y).
\end{array}
\ee
So we obtain
\beas
\psi &=& \left(\lambda_{D_1}Id - \mathcal{K}_{D_1}^*\right)^{-1}\Big[\df{\p }{\p \nu}G(\cdot,y)\Big],
\\
\lambda_{D_1} &=& \df{\eps_1+\eps_m}{2(\eps_1-\eps_m)}.
\eeas
Therefore, from \eqref{eq-representation green function sensing} and the uniqueness of a solution to \eqref{eq-Helmholtz eq green function biosensing}, we have the following representation for the Green's function $G_{D_1}$:
\be \label{eq-green function D_1 sensing}
G_{D_1}(x,y) = G(x,y) + \mathcal{S}_{D_1}\left(\lambda_{D_1}Id - \mathcal{K}_{D_1}^*\right)^{-1}\Big[\df{\p }{\p \nu}G(\cdot,y)\Big](x) \quad\mbox{for } x,y \in \R^2 \backslash \overline{D_1}.
\ee

\subsection{Representation of the total potential}

Here we derive a layer potential representation of the total potential $u$, which is the solution to \eqref{eq-Helmholtz eq biosensing}.

Let 
$u_{D_1}$ be the total field resulting from the incident field $u^i$ and the ordinary particle $D_1$ 
(without the plasmonic particle $D_2$).
Note that $u_{D_1}$ is given by
\beas
u_{D_1}(x) = u^i(x) + \mathcal{S}_{D_1}\left(\lambda_{D_1}Id - \mathcal{K}_{D_1}^*\right)^{-1}[\df{\p u^i}{\p \nu_1}](x), \quad \mbox{ for }  x \in \R^2 \backslash \overline{D_1}.
\eeas
To consider 
the total potential $u$,
we also need to represent the field generated by the plasmonic particle $D_2$. For this, we introduce a new layer potential $\mathcal{S}_{D_2,D_1}$ as follows:
\beas
\mathcal{S}_{D_2,D_1}[\varphi](x) = \int_{\p D_2}G_{D_1}(x,y)\varphi(y)d\sigma(y).
\eeas

The total potential $u$ can be represented in the following form:
\be \label{solution helm nanoparticle biosenging}
u = u_{D_1} + \mathcal{S}_{D_2,D_1} [\psi], \quad x \in \R^2 \backslash \overline{D_2}.
\ee
We need to find a boundary integral equation for the density $\psi$.
It follows from \eqref{eq-green function D_1 sensing} that, for any $\varphi$,
\beas
\mathcal{S}_{D_2,D_1}[\varphi](x) = \mathcal{S}_{D_2}[\varphi](x)+\mathcal{S}_{D_2,D_1}^{1}[\varphi](x),
\eeas
where $\mathcal{S}_{D_2,D_1}^{1}$ is given by
\beas
\mathcal{S}_{D_2,D_1}^{1}[\varphi](x) := \int_{\p D_2}\mathcal{S}_{D_1}\left(\lambda_{D_1}Id - \mathcal{K}_{D_1}^*\right)^{-1}[\df{\p }{\p \nu_1}G(\cdot,y)](x)\varphi(y)d\sigma(y).
\eeas
The expression of $\mathcal{S}_{D_2,D_1}^{1}[\varphi]$ can be further developed using the following spectral expansion of the free-space Green function $G$ \cite{kang1}:
\beas
G(x,y) = -\sum_{j=0}^{\infty}\mathcal{S}_{D}[\varphi_j](x)\mathcal{S}_{D}[\varphi_j](y) + \mathcal{S}_D[\varphi_0](x),\quad\mbox{for }  x \in \R^2 \backslash \overline{D} \mbox{ and }  y\in \overline{D},
\eeas
where $\varphi_j,j=1,2,...$ are eigenfunctions of $\mathcal{K}_D^*$ on $\mathcal{H}^*(\p D)$ and $\varphi_0$ is an eigenfunction associated to the eigenvalue $1/2$.
Then, for any $\varphi\in\mathcal{H}^*$, we get
\begin{align*}
\int_{\p D_2} G(\cdot,y) \varphi(y)d\sigma(y) &= 
\sum_{j=1}^\infty\mathcal{S}_{D_2}[\varphi_j]( \varphi,\varphi_j)_{\mathcal{H}^*(\p D_2)} + \mathcal{S}_D[\varphi_0](x)\int_{\p D_2} \varphi(y) 
\\
&= \sum_{j=1}^\infty\mathcal{S}_{D_2}[\varphi_j] ( \varphi,\varphi_j)_{\mathcal{H}^*(\p D_2)}.
\end{align*}
Therefore, for any $\varphi \in\mathcal{H}^*$, we have,
\beas
\mathcal{S}_{D_2,D_1}^{1}[\varphi](x) &=& \int_{\p D_2}\mathcal{S}_{D_1}\left(\lambda_{D_1}Id - \mathcal{K}_{D_1}^*\right)^{-1}[\df{\p }{\p \nu_1}G(\cdot,y)](x)\varphi(y)d\sigma(y)
%\\
%&=&  \int_{\p D_2}\mathcal{S}_{D_1}\left(\lambda_{D_1}Id - \mathcal{K}_{D_1}^*\right)^{-1} \Big[ \df{\p }{\p \nu_1}\sum_{j=0}^{\infty}\mathcal{S}_{D_2}[\varphi_j](\cdot) \Big] (x)(-1)\mathcal{S}_{D_2}[\varphi_j](y) \varphi(y)d\sigma(y)
\\
&=&  \mathcal{S}_{D_1}\left(\lambda_{D_1}Id - \mathcal{K}_{D_1}^*\right)^{-1}\df{\p }{\p \nu_1}\mathcal{S}_{D_2}\Big[\sum_{j=0}^{\infty}(\varphi,\varphi_j)_{\mathcal{H}^*}\varphi_j\Big] (x)\\
&=&  \mathcal{S}_{D_1}\left(\lambda_{D_1}Id - \mathcal{K}_{D_1}^*\right)^{-1}\df{\p \mathcal{S}_{D_2}[\varphi]}{\p \nu_1}(x),
\eeas
where we have used the notation $\f{\p}{\p \nu_i}$ to indicate the outward normal derivative on $\p D_i$. 
 
Combining the boundary conditions in \eqref{eq-Helmholtz eq biosensing}, the representation formula \eqref{solution helm nanoparticle biosenging} and the jump formula \eqref{eqn_jump_single2} yields the following equation for $\psi$
\beas
\left(\mathcal{A}_{D_2,0} + \mathcal{A}_{D_2,1}\right)[\psi] = \df{\p u_{D_1}}{\p \nu_2},
\eeas
where
\bea
\mathcal{A}_{D_2,0} &=& \lambda_{D_2}Id - \mathcal{K}_{D_2}^*,\nonumber\\
\lambda_{D_2} &=& \df{\eps_2+\eps_m}{2(\eps_2-\eps_m)}, \label{def-lambda_2 biosensing}\\
\mathcal{A}_{D_2,1} &=& \df{\p \mathcal{S}_{D_2,D_1}^{1}}{\p \nu_2} \; =\: \f{\p }{\p \nu_2}\mathcal{S}_{D_1}\left(\lambda_{D_1}Id - \mathcal{K}_{D_1}^*\right)^{-1}\df{\p \mathcal{S}_{D_2}[\cdot]}{\p \nu_1}. \label{def-perturbation operator biosensing}
%\\f_2 &=& \df{\p u_{D_1}}{\p \nu_2}. \nonumber
\eea

\subsection{Intermediate regime and asymptotic expansion of the scattered field}

Here
we introduce the concept of intermediate regime and derive the asymptotic expansion of the scattered field $u-u^i$ for small $\delta$.

\begin{definition}[\textbf{Intermediate regime}] \label{def-intermediate sensing}
We say that $D_2$ is in the intermediate regime with respect to the origin if there exist positive constants $C_1$ and $C_2$ such that $C_1 < C_2$ and
$$
C_1 \leq 
{\rm{dist}} (0,D_2) \leq C_2.
$$
\end{definition}

Definition \ref{def-intermediate sensing} says that the plasmonic particle $D_2$ is located not too close to $D_1$ nor far from $D_1$.
Throughout this paper, we assume the plasmonic particle $D_2$ is in the intermediate regime.
We have the following result.

\begin{prop} \label{prop-intermediate-AD21-small}
If $D_2$ is in the intermediate regime, then
$\|\mathcal{A}_{D_2,1}\|_{\mathcal{H}*}=O(\delta^2)$ as $\delta\rightarrow 0$.
\end{prop}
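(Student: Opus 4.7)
My plan is to track the three operators composing
\[
\mathcal{A}_{D_2,1}=\tfrac{\p}{\p\nu_2}\circ \mathcal{S}_{D_1}\circ(\lambda_{D_1}Id-\mathcal{K}^*_{D_1})^{-1}\circ \tfrac{\p\mathcal{S}_{D_2}[\cdot]}{\p\nu_1}
\]
step by step under the rescaling $x=\delta\tilde x$ that sends $D_1=\delta B$ onto the reference domain $B$. Fix $\varphi\in\mathcal{H}^*(\p D_2)$ and set $H:=\mathcal{S}_{D_2}[\varphi]$. Because $\mathrm{dist}(0,D_2)\ge C_1$, $H$ is harmonic in a fixed neighborhood of $D_1$ and interior estimates yield $\|H\|_{C^k}\le C_k\|\varphi\|_{\mathcal{H}^*(\p D_2)}$ uniformly in $\delta$. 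The Neumann--Poincar\'e kernel $(x-y)\cdot\nu(x)/|x-y|^2$ is scale invariant, so under $\tilde\psi(\tilde x)=\psi(\delta\tilde x)$ one has $\mathcal{K}^*_{D_1}=\mathcal{K}^*_B$. Since the ordinary particle is non-resonant ($|\lambda_{D_1}|>1/2$ for positive $\eps_1,\eps_m$, away from $\sigma(\mathcal{K}^*_B)\subset[-1/2,1/2]$), the inverse $(\lambda_{D_1}Id-\mathcal{K}^*_B)^{-1}$ is bounded on $\mathcal{H}^*(\p B)$ uniformly in $\delta$.

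Applied to the rescaled datum $\tilde h(\tilde x)=\nabla H(\delta\tilde x)\cdot\nu(\tilde x)=\nabla H(0)\cdot\nu(\tilde x)+O(\delta)$, this gives an intermediate density $\tilde\psi$ that is uniformly bounded in $\mathcal{H}^*(\p B)$ and, crucially, has mean zero on $\p B$ since $\tilde h$ is the normal derivative of a function harmonic on $B$. The two factors of $\delta$ are then produced in the outgoing step: for $y\in\p D_2$ one has $|y|\ge C_1>0$, so the Taylor expansion $G(y,\delta\tilde x)=G(y,0)+\delta\nabla_x G(y,0)\cdot\tilde x+O(\delta^2)$ holds with remainder uniform in $y\in\p D_2$, and integration against $\tilde\psi$ yields
\[
\mathcal{S}_{D_1}[\psi](y)=\delta\,G(y,0)\!\int_{\p B}\tilde\psi\,d\tilde\sigma+\delta^2\nabla_x G(y,0)\cdot\!\int_{\p B}\tilde x\,\tilde\psi\,d\tilde\sigma+O(\delta^3).
\]
The first term vanishes by the mean-zero property. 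Taking $\p/\p\nu_2$ on $\p D_2$ (a bounded operation since $y$ is away from the singularity) preserves the $O(\delta^2)$ bound pointwise, and because the resulting trace on $\p D_2$ is smooth, one controls its $\mathcal{H}^*(\p D_2)$-norm by any $C^k$-norm, giving $\|\mathcal{A}_{D_2,1}[\varphi]\|_{\mathcal{H}^*(\p D_2)}\le C\delta^2\|\varphi\|_{\mathcal{H}^*(\p D_2)}$.

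The only delicate point is the two-dimensional logarithmic scaling of the single layer: $G(\delta\tilde x,\delta\tilde y)=\tfrac{1}{2\pi}\log\delta+G(\tilde x,\tilde y)$ carries a rogue $\log\delta$ that would degrade the estimate. It is precisely killed here by $\int_{\p B}\tilde\psi\,d\tilde\sigma=0$, a direct consequence of $\tilde h$ being a normal derivative trace, and it is this same cancellation that upgrades a naive $O(\delta\log\delta)$ bound to the sharp $O(\delta^2)$ order stated in the proposition.
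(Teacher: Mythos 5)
Your proof is correct and follows essentially the same path as the paper: rescale $D_1=\delta B$ to the reference domain, use scale invariance of the Neumann--Poincar\'e operator together with the uniform resolvent bound (valid since $|\lambda_{D_1}|>1/2$ for positive $\eps_1,\eps_m$), establish by Green's identity that the intermediate density has mean zero, and exploit that cancellation to kill the monopole contribution in the outgoing single-layer potential. The only cosmetic difference is in how the two factors of $\delta$ are extracted: you obtain both at once from the second-order Taylor expansion of $G(y,\delta\tilde x)$ paired against a mean-zero density that is $O(1)$ on $\p B$, whereas the paper takes one factor from the incoming bound $\|\widetilde{\varphi}\|_{\mathcal{H}^*(\p D_1)}\le C\delta$ and one from the subtraction $\log|x-y|-\log|x|=O(\delta)$; these accountings are equivalent under the scaling $\|\psi\|_{\mathcal{H}^*(\p D_1)}\sim\delta\|\tilde\psi\|_{\mathcal{H}^*(\p B)}$ for mean-zero densities, so your version merely sidesteps the $\delta$-dependent norm on $\p D_1$.
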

\proof
Fix  $\varphi\in\mathcal{H}^*(\p D_2)$ and 
let
$$\widetilde{\varphi}:=(\lambda_{D_1}Id -\mathcal{K}_{D_1}^*)^{-1}\Big[\frac{\p \mathcal{S}_{D_2}[\varphi]}{\p\nu_1} \Big].$$
Since $\mathcal{S}_{D_2}[\varphi]$ is harmonic in $D_1$, the Green's identity gives
$
\int_{\p D_1}\frac{\p}{\p\nu_1}{ \mathcal{S}_{D_2}[\varphi]} = 0.
$
Then it can be proved that $\int_{\p D_1}\widetilde{\varphi}=0$.
So we get
\begin{align*}
\mathcal{S}_{D_1}[\widetilde{\varphi}](x)
&=\int_{\p D_1} (\log|x-y| - \log|x|) \widetilde{\varphi}(y) d\sigma(y) + \log|x|\int_{\p D_1} \widetilde{\varphi}(y) d\sigma(y)
\\
&=\int_{\p D_1} (\log|x-y| - \log|x|) \widetilde{\varphi}(y) d\sigma(y). 
\end{align*}
Therefore, since $|y-x|\geq C'$ and $|y|\leq C \delta$ for $(y,x)\in (\p D_1,  \p D_2)$, we obtain 
$$
\| \mathcal{A}_{D_2,1}[{\varphi}] \|_{\mathcal{H}^*(\p D_2)}
=
\big\| \frac{\p}{\p\nu_2}\mathcal{S}_{D_1}[\widetilde{\varphi}] \big\|_{\mathcal{H}^*(\p D_2)}
\leq
C \delta \| \widetilde{\varphi}\|_{\mathcal{H}^*(\p D_1)}.
$$

Now it suffices to prove that
\begin{equation}
\| \widetilde\varphi\|_{\mathcal{H}^*(\p D_1)} \leq C \delta.
\end{equation}
Recall that $D_1 = \delta B$.
Let $f_\delta(y)=f(\delta y)$. Then the function $f_\delta$ belongs to $\mathcal{H}^*(\p B)$ for $f \in \mathcal{H}^*(\p D_1)$. 
Since it is known that $\mathcal{K}_\Omega^*$ is scale-invariant for any $\Omega$, we have $\mathcal{K}_{D_1}^*[f] =\mathcal{K}_{B}^*[f_\delta]$. Therefore,
$$
\widetilde{\varphi}= \left(\lambda_{D_1}Id - \mathcal{K}_{D_1}^*\right)^{-1}[f] d(\delta\sigma(y))   = 
\left(\lambda_{D_1}Id - \mathcal{K}_{B}^*\right)^{-1}[f_\delta] d(\delta\sigma(y))  . 
$$
Again, since 
 $|y-x|\geq C'$ for $(y,x)\in (\p D_1,  \p D_2)$ and $|\p D_1|= O(\delta)$, we arrive at
\begin{align*}
\| \widetilde\varphi\|_{\mathcal{H}^*(\p D_1)}
&=
\|\left(\lambda_{D_1}Id - \mathcal{K}_{B}^*\right)^{-1}
\Big[\Big(\frac{\p \mathcal{S}_{D_2}[\varphi]}{\p\nu_1}\Big)_\delta \Big]
\|_{\mathcal{H}^*(\p B)}
\\
&\leq C\|\frac{\p }{\p\nu_1}\mathcal{S}_{D_2}[\varphi]
\|_{\mathcal{H}^*(\p D_1)} \leq C \delta.
\end{align*}
The proof is completed.
\qed

%\begin{cond} \label{cond-perturbation condition}

%$\|\mathcal{A}_{D_2,1}\|_{\mathcal{H}*} \ll 1$
%\end{cond}
\smallskip

From Proposition \ref{prop-intermediate-AD21-small}, we can view $\mathcal{A}_{D_2,1}$
as a perturbation of $\mathcal{A}_{D_2,0}$.
Using standard perturbation theory \cite{berry}, we can derive the perturbed eigenvalues and associated eigenfunctions.

Let $\lambda_j$ and $\varphi_j$ be the eigenvalues and eigenfunctions of $\mathcal{K}_{D_2}^*$ on $\mathcal{H}^*(\p D_2)$.
For simplicity, we consider the case when $\lambda_j$ is a  simple eigenvalue of the operator $\mathcal{K}_{D_2}^*$.
Let us define
\be
R_{jl} = \big( {\mathcal{A}}_{D_2,1}[\varphi_l], \varphi_j \big)_{\mathcal{H^*}(\p D_2)},
\ee
where ${\mathcal{A}}_{D_2, 1}$ is given by \eqref{def-perturbation operator biosensing}. Note that $R_{jl} = O(\delta^2)$.

The perturbed eigenvalues have the following form:
\beas
\tau_j (\delta) = \lambda_{D_2}-\lambda_j + \mathcal{P}_{j}, \label{tau-single}
\eeas
where $\mathcal{P}_j$ are given by
\begin{align}
\mathcal{P}_{j} &= R_{jj} + \sum_{l\neq j}\f{R_{jl}R_{lj}}{\lambda_j-\lambda_l} + \sum_{(l_1,l_2)\neq j}\f{R_{jl_2}R_{l_2l_1}R_{l_1j}}{(\lambda_j-\lambda_{l_1})(\lambda_j-\lambda_{l_2})} 
\nonumber
\\
&\quad
+ \sum_{(l_1,l_2,l_3)\neq j}\f{R_{jl_3}R_{l_3l_2}R_{l_2l_1}R_{l_1j}}{(\lambda_j-\lambda_{l_1})(\lambda_j-\lambda_{l_2})(\lambda_j-\lambda_{l_3})} + \cdots.
\label{Pj_expansion}
\end{align}
%See (give a reference) for a detail explanation on higher order eigenvalues perturbation.
Also, the perturbed eigenfunctions have the following form:
\bea
\varphi_j(\delta) = \varphi_j + O(\delta^2). \label{eigenfun-single}
\eea
Here the remainder term is with respect to the norm $\| \cdot\|_{\mathcal{H}^*(\p D_2)}$.

\begin{rmk} Note that $\mathcal{P}_j$ depends not only on the geometry and material properties of $D_1$, but also on $D_2$'s properties, in particular its position $z$.
\end{rmk}

\begin{thm} \label{thm1 biosensing}
If $D_2$ is in the intermediate regime, the scattered field $u^s_{D_2}=u- u_{D_1}$ by the plasmonic particle $D_2$ has the following  representation:
$$
u^s_{D_2} = \mathcal{S}_{D_2,D_1} [\psi],
$$
where $\psi$ satisfies
\beas
\psi = \sum_{j=1}^{\infty}\f{ \left(\nabla u^i(z)\cdot\nu,\varphi_j\right)_{\mathcal{H}^*(\p D_2)} \varphi_j + O(\delta^2)}{ \lambda_{D_2} - \lambda_j + \mathcal{P}_j}
\eeas
with $\lambda_{D_2}$ being given by (\ref{def-lambda_2 biosensing}). 
\end{thm}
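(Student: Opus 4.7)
\medskip

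\noindent\textbf{Proof proposal.} The plan is to start from the integral equation derived just before the theorem,
$$(\mathcal{A}_{D_2,0} + \mathcal{A}_{D_2,1})[\psi] = \frac{\p u_{D_1}}{\p \nu_2},$$
and invert it by applying perturbed spectral decomposition on $\mathcal{H}^*(\p D_2)$. The representation $u^s_{D_2} = u - u_{D_1} = \mathcal{S}_{D_2,D_1}[\psi]$ is already available from \eqref{solution helm nanoparticle biosenging}, so the content of the theorem is only the formula for $\psi$.

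First, I would simplify the right-hand side on $\p D_2$. Writing
$$u_{D_1}(x) = u^i(x) + \mathcal{S}_{D_1}(\lambda_{D_1}Id-\mathcal{K}_{D_1}^*)^{-1}\Big[\frac{\p u^i}{\p\nu_1}\Big](x),$$
and applying the same rescaling and distance estimate used in the proof of Proposition \ref{prop-intermediate-AD21-small} (namely $D_1=\delta B$, $|y-x|\geq C'$ for $(y,x)\in\p D_1\times\p D_2$, and $|\p D_1|=O(\delta)$), I would show that the second term has $\mathcal{H}^*(\p D_2)$-norm $O(\delta^2)$ after taking $\p/\p\nu_2$. Since $u^i(x)=d\cdot x$ is affine, $\nabla u^i$ is constant and hence
$$\frac{\p u^i}{\p \nu_2}(x) = d\cdot \nu_2(x) = \nabla u^i(z)\cdot \nu_2(x) \quad\text{on }\p D_2,$$
so that
$$\frac{\p u_{D_1}}{\p \nu_2} = \nabla u^i(z)\cdot\nu + r_\delta, \qquad \|r_\delta\|_{\mathcal{H}^*(\p D_2)} = O(\delta^2).$$

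Next, I would invoke the perturbed spectral decomposition. By Proposition \ref{prop-intermediate-AD21-small}, $\mathcal{A}_{D_2,1}$ is an $O(\delta^2)$-bounded perturbation of the self-adjoint operator $\mathcal{A}_{D_2,0}=\lambda_{D_2}Id-\mathcal{K}_{D_2}^*$ whose spectrum consists of the simple eigenvalues $\lambda_{D_2}-\lambda_j$ with eigenfunctions $\varphi_j$. Standard perturbation theory (as quoted in \eqref{Pj_expansion} and \eqref{eigenfun-single}) yields perturbed eigenvalues $\tau_j(\delta)=\lambda_{D_2}-\lambda_j+\mathcal{P}_j$ with eigenfunctions $\varphi_j(\delta)=\varphi_j+O(\delta^2)$, and these form a basis of $\mathcal{H}^*(\p D_2)$ (provided $\lambda_{D_2}$ is not too close to $\lambda_j$, which can be assumed away from the resonances of the unperturbed problem). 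Expanding the right-hand side in this basis gives
$$\psi = \sum_{j=1}^\infty \frac{\big(\nabla u^i(z)\cdot\nu + r_\delta,\, \varphi_j(\delta)\big)_{\mathcal{H}^*(\p D_2)}}{\tau_j(\delta)}\, \varphi_j(\delta).$$

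Finally, I would absorb the perturbation of the eigenfunctions and the remainder $r_\delta$ into the $O(\delta^2)$ numerator term: using $\varphi_j(\delta) = \varphi_j + O(\delta^2)$ and $\|r_\delta\|_{\mathcal{H}^*(\p D_2)}=O(\delta^2)$, the numerator becomes $(\nabla u^i(z)\cdot\nu,\varphi_j)_{\mathcal{H}^*(\p D_2)}+O(\delta^2)$, yielding the stated formula. The step I expect to be the main technical nuisance is the uniform control of the remainder across all modes so that the series converges in $\mathcal{H}^*(\p D_2)$; this requires using the spectral gap between $\lambda_{D_2}$ and $\{\lambda_j\}$ to bound the denominators uniformly from below, which is precisely what the intermediate-regime assumption and the smallness of $\mathcal{P}_j=O(\delta^2)$ provide.
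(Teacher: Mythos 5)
Your proposal is correct and matches the approach implicit in the paper: the paper does not print a separate proof of Theorem~\ref{thm1 biosensing}, because it follows directly from the ingredients assembled just before it (the integral equation $(\mathcal{A}_{D_2,0}+\mathcal{A}_{D_2,1})[\psi]=\p u_{D_1}/\p\nu_2$, Proposition~\ref{prop-intermediate-AD21-small}, and the perturbed eigenpairs \eqref{Pj_expansion}--\eqref{eigenfun-single}), and your derivation is exactly the assembly of those ingredients, including the observation that $\p u^i/\p\nu_2=\nabla u^i(z)\cdot\nu$ is exact for a linear incident field and the $O(\delta^2)$ bound for the $D_1$-scattered part of $\p u_{D_1}/\p\nu_2$ by the same scaling argument as in Proposition~\ref{prop-intermediate-AD21-small}.

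Two small remarks on your wording: $\mathcal{A}_{D_2,0}=\lambda_{D_2}Id-\mathcal{K}_{D_2}^*$ is not self-adjoint since $\lambda_{D_2}$ is complex (the Drude model gives $\eps_2$ a nonzero imaginary part); what is self-adjoint is $\mathcal{K}_{D_2}^*$, whose eigenfunctions $\{\varphi_j\}$ still diagonalize $\mathcal{A}_{D_2,0}$. And the invertibility of $\mathcal{A}_{D_2,0}+\mathcal{A}_{D_2,1}$ is guaranteed not by staying ``away from resonance'' but by the fact that $\mathrm{Im}\,\lambda_{D_2}\neq 0$ keeps $\lambda_{D_2}-\lambda_j+\mathcal{P}_j$ bounded away from zero; resonance corresponds to this quantity being minimized, not vanishing.
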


As a corollary, we have the following asymptotic expansion of the scattered field $u-u^i$.
\begin{thm} \label{thm-far field sensing}
We have the following far field expantion:
\beas
(u-u^i)(x) =   \nabla u^i(z)\cdot M(\lambda_{D_1},\lambda_{D_2},D_1,D_2) \nabla G(x,z) + O(\delta^2) +O\left(\f{\delta^3 }{\textnormal{dist}(\lambda_{D_2},\sigma(\mathcal{K}^*_{D_2}))}\right),
\eeas
as $|x|\rightarrow \infty$.
Here, $M(\lambda_{D_1},\lambda_{D_2},D_1,D_2)$ is the polarization tensor satisfying
\be \label{eq-PT_D1D2 sensing}
M(\lambda_{D_1},\lambda_{D_2},D_1,D_2)_{l,m} = \sum_{j=1}^{\infty}\f{(\nu_l,\varphi_j)_{\mathcal{H}^*(\p D_2) } (\varphi_j,x_m)_{-\f{1}{2},\f{1}{2}} + O(\delta^2)}{ \lambda_{D_2} - \lambda_j +  \mathcal{P}_j },
\ee
for $l,m=1,2$.
\end{thm}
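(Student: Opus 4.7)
The plan is to combine the representation of $u^s_{D_2}$ from Theorem~3.1 with a far-field multipole expansion of the single-layer potential, while absorbing all $D_1$-induced corrections into the remainder. First, I would decompose
$u - u^i = (u_{D_1}-u^i) + u^s_{D_2} = (u_{D_1}-u^i) + \mathcal{S}_{D_2,D_1}[\psi]$.
The first summand is the scattered field of $D_1$ alone; its leading far-field behavior is a dipole of the form $\nabla u^i(0)\cdot M(\lambda_{D_1},D_1)\nabla G(x,0)$, and since $|D_1| = O(\delta^2)$ the polarization tensor $M(\lambda_{D_1},D_1)$ is itself $O(\delta^2)$, so this entire contribution is absorbed into the $O(\delta^2)$ remainder.

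For $\mathcal{S}_{D_2,D_1}[\psi]$ I would use the decomposition $\mathcal{S}_{D_2,D_1} = \mathcal{S}_{D_2} + \mathcal{S}_{D_2,D_1}^1$ from the preceding subsection. Writing $\mathcal{S}_{D_2,D_1}^1[\psi](x) = \mathcal{S}_{D_1}[\widetilde\psi](x)$ with $\widetilde\psi = (\lambda_{D_1}I - \mathcal{K}_{D_1}^*)^{-1}[\partial_{\nu_1}\mathcal{S}_{D_2}[\psi]]$, the argument from the proof of Proposition~3.1 gives $\int_{\partial D_1}\widetilde\psi = 0$ and $\|\widetilde\psi\|_{\mathcal{H}^*(\partial D_1)} = O(\delta)$, so that $\mathcal{S}_{D_1}[\widetilde\psi](x)$ decays as a dipole of order $O(\delta^2/|x|)$ at infinity, again absorbed into the remainder. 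For the main piece I would Taylor expand $G(x,y)$ about $y=z$ and use the mean-zero property of $\psi\in\mathcal{H}^*(\partial D_2)$ to obtain
\[
\mathcal{S}_{D_2}[\psi](x) = \nabla G(x,z)\cdot\int_{\partial D_2}(y-z)\,\psi(y)\,d\sigma(y) + O(|x|^{-2}).
\]

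Next, I would substitute the spectral representation of $\psi$ from Theorem~3.1. Because $u^i(y) = d\cdot y$ is affine, $\nabla u^i(z)=d$ is constant, and the pairing $(d\cdot\nu,\varphi_j)_{\mathcal{H}^*}$ splits linearly as $\sum_l d_l (\nu_l,\varphi_j)_{\mathcal{H}^*}$; the mean-zero property of $\varphi_j$ moreover yields $(\varphi_j, y_m - z_m)_{-1/2,1/2} = (\varphi_j, y_m)_{-1/2,1/2}$. Collecting indices, the contribution of each eigenmode combines to form exactly the matrix $M(\lambda_{D_1},\lambda_{D_2},D_1,D_2)_{l,m}$ from \eqref{eq-PT_D1D2 sensing}, contracted with $\nabla u^i(z)$ and with $\nabla G(x,z)$.

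Finally, the two remainders have to be tracked. The $O(\delta^2)$ error built into the numerator of $\psi$ in Theorem~3.1, once paired with $y-z$ on $\partial D_2$ and passed through the resolvent $(\lambda_{D_2} - \lambda_j + \mathcal{P}_j)^{-1}$, produces the $O(\delta^3/\mathrm{dist}(\lambda_{D_2},\sigma(\mathcal{K}_{D_2}^*)))$ term, the extra factor of $\delta$ coming from pairing the residual density against the degree-one polynomial on $\partial D_2$. I expect this last step — the uniform spectral-tail estimate near plasmonic resonance — to be the main obstacle: individual summands are nearly singular, and one must argue that the residual $O(\delta^2)$ density in $\psi$ does not resonate at the same order as the leading mode, and that the inherited resolvent factor enters only once rather than being compounded by the sum over $j$.
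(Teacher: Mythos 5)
Your overall decomposition $u-u^i = (u_{D_1}-u^i) + \mathcal{S}_{D_2}[\psi] + \mathcal{S}^{1}_{D_2,D_1}[\psi]$, the extraction of the leading dipole via the Taylor expansion of $G(x,y)$ about $y=z$, the use of the mean-zero property of $\psi$ and of $\varphi_j$, and the substitution of the spectral representation from Theorem~\ref{thm1 biosensing} together constitute the natural — and correct — route to the main term $\nabla u^i(z)\cdot M(\lambda_{D_1},\lambda_{D_2},D_1,D_2)\nabla G(x,z)$; the paper presents the theorem as an immediate corollary of Theorem~\ref{thm1 biosensing}, which is exactly this deduction. The problems are in how you account for the two remainders.

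First, the bound $\|\widetilde\psi\|_{\mathcal{H}^*(\partial D_1)}=O(\delta)$ that you borrow from Proposition~\ref{prop-intermediate-AD21-small} is an estimate for the linear operator $\varphi\mapsto\widetilde\varphi$ applied to a \emph{normalized} density. Here you are applying it to $\psi$, which by Theorem~\ref{thm1 biosensing} carries the resolvent factor $(\lambda_{D_2}-\lambda_j+\mathcal{P}_j)^{-1}$, so $\|\psi\|$ is of size $O\bigl(1/\mathrm{dist}(\lambda_{D_2},\sigma(\mathcal{K}^*_{D_2}))\bigr)$. Consequently $\|\widetilde\psi\|=O\bigl(\delta/\mathrm{dist}\bigr)$ and the far-field dipole from $\mathcal{S}_{D_1}[\widetilde\psi]$ is resonance-\emph{dependent}; it cannot be lumped into the resonance-\emph{independent} $O(\delta^2)$ term. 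That $O(\delta^2)$ term should only absorb $u_{D_1}-u^i$, which does not involve $D_2$ or the resolvent at all.

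Second, your explanation of the $O\bigl(\delta^3/\mathrm{dist}\bigr)$ term is not right. The $O(\delta^2)$ error in the numerator of the spectral expansion of $\psi$, once divided by $\lambda_{D_2}-\lambda_j+\mathcal{P}_j$ and paired with $y_m$, is precisely the $O(\delta^2)$ appearing in the numerator of $M_{l,m}$ in~\eqref{eq-PT_D1D2 sensing}; it is already part of the stated polarization tensor, not an additional remainder to be estimated separately. Moreover, the "extra factor of $\delta$ from pairing against the degree-one polynomial on $\partial D_2$" does not exist: $D_2$ has size $O(1)$, so $|y-z|=O(1)$ on $\partial D_2$, and only pairings over $\partial D_1$ (which has diameter $O(\delta)$) pick up extra powers of $\delta$. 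The $O(\delta^3/\mathrm{dist})$ term must therefore be sourced from a sharper analysis of $\mathcal{S}^{1}_{D_2,D_1}[\psi]$, which you have currently bounded only crudely. As it stands, your accounting neither produces the stated $\delta^3$ power nor isolates where the $1/\mathrm{dist}$ factor belongs.
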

We remark that the scattered field in the above expression depends on the frequency (since $\lambda_{D_2}$ does so) and exhibit local peaks at certain frequencies when one of the denominators is close to zero and is minimized while the associated nominator is not zero. These frequencies are called the resonant frequencies of the system. It is clear that these resonant frequencies also depend on the geometry and the electric permittivity of $D_1$ through the perturbative terms $\mathcal{P}_j$'s. We shall use this fact in the next section to solve the associated inverse problem of reconstructing $D_1$ by using those frequencies. 

\subsection{Representation of the shift $\mathcal{P}_j$ using CGPTs}

Here we show that the term $\mathcal{P}_j$ in the plasmonic resonances can be expressed in terms of the CGPTs.  
The CGPTs carry information on the geometry and material properties of $D_1$. 
See \cite{book2} for a detailed reference. We shall reconstruct the ordinary particle $D_1$ from the measurement of the shift $\mathcal{P}_j$.

\begin{prop}  \label{prop-GPT perturbation}
If $D_2$ is in the intermediate regime, then the perturbative terms $R_{jl}$ can be represented using CGPTs $M_{m,n}(\lambda_{D_1},D_1)$ associated with $D_1$ as follows: 
\be\label{Rjl_CGPT}
R_{jl} = \left(\f{1}{2}-\lambda_j\right)\sum_{m= 1}^{M} \sum_{ n = 1}^{N} a_{m}^j M_{m,n}(\lambda_{D_1},D_1)(a_{n}^l)^t+ O(\delta^{M+N+1}),
\ee
where the superscript $t$ denotes the transpose and $a_{m}^j  = (a_{m,c}^j,a_{m,s}^j)$ with 
\beas
a_{m,c}^j &=&  -\f{1}{2\pi m}\int_{\p D_2} \f{\cos(m \theta_y)}{ r_y^m} \varphi_j(y)d\sigma(y) ,\\
a_{m,s}^j &=&  -\f{1}{2\pi m}\int_{\p D_2}\f{\sin(m \theta_y)}{ r_y^m} \varphi_j(y)d\sigma(y).
\eeas
Here, $(r_y,\theta_y)$ denote the polar coordinates of $y$ and $\{\varphi_j\}_j$ is an orthonormal basis of eigenfunctions of $\mathcal{K}_{D_2}^*$ on $\mathcal{H}^*$.
\end{prop}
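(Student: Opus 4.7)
The plan is to reduce $R_{jl} = (\mathcal{A}_{D_2,1}[\varphi_l], \varphi_j)_{\mathcal{H}^*(\partial D_2)}$, via Green's identity on $D_2$, to a single boundary integral on $\partial D_2$, and then to insert two multipole expansions that make the CGPTs of $D_1$ appear. Define $\tilde\varphi_l := (\lambda_{D_1}I - \mathcal{K}_{D_1}^*)^{-1}\bigl[\partial_{\nu_1}\mathcal{S}_{D_2}[\varphi_l]\bigr]$, so $\mathcal{A}_{D_2,1}[\varphi_l] = \partial_{\nu_2}\mathcal{S}_{D_1}[\tilde\varphi_l]|_{\partial D_2}$. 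Because $D_1$ and $D_2$ are separated in the intermediate regime, $\mathcal{S}_{D_1}[\tilde\varphi_l]$ is harmonic in a neighbourhood of $\overline{D_2}$. Applying the definition of the $\mathcal{H}^*$ inner product, Green's identity in $D_2$ between the two harmonic functions $\mathcal{S}_{D_1}[\tilde\varphi_l]$ and $\mathcal{S}_{D_2}[\varphi_j]$, and the interior jump relation $\partial_{\nu_2}\mathcal{S}_{D_2}[\varphi_j]|_{-} = (\lambda_j - \tfrac{1}{2})\varphi_j$, I obtain
$$R_{jl} \;=\; \bigl(\tfrac{1}{2}-\lambda_j\bigr)\int_{\partial D_2}\mathcal{S}_{D_1}[\tilde\varphi_l](x)\,\varphi_j(x)\,d\sigma(x),$$
which already produces the advertised prefactor.

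I next Taylor-expand the logarithm in two complementary regimes. For $y$ near the origin (in particular in $D_1$) and $z \in \partial D_2$, expanding $\log|y-z|$ in powers of $r_y/r_z$ and using $\int_{\partial D_2}\varphi_l\,d\sigma = 0$ gives
$$\mathcal{S}_{D_2}[\varphi_l](y) \;=\; \sum_{m\geq 1}\bigl[a^l_{m,c}\,\mbox{Re}\{P_m(y)\} + a^l_{m,s}\,\mbox{Im}\{P_m(y)\}\bigr],$$
with exactly the coefficients stated in the proposition. In the opposite regime, for $y \in \partial D_1 = \delta\partial B$ and $x \in \partial D_2$, together with the mean-zero identity $\int_{\partial D_1}\tilde\varphi_l\,d\sigma = 0$ (established inside the proof of Proposition 3.1), the same series yields
$$\mathcal{S}_{D_1}[\tilde\varphi_l](x) \;=\; -\sum_{n\geq 1}\frac{1}{2\pi n\, r_x^n}\Bigl[\cos(n\theta_x)\!\int_{\partial D_1}\!\!\mbox{Re}\{P_n\}\tilde\varphi_l\,d\sigma + \sin(n\theta_x)\!\int_{\partial D_1}\!\!\mbox{Im}\{P_n\}\tilde\varphi_l\,d\sigma\Bigr].$$

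The CGPTs now emerge by direct substitution. Plugging the first expansion into $\tilde\varphi_l$ and exploiting linearity of $(\lambda_{D_1}I - \mathcal{K}_{D_1}^*)^{-1}$, the integrals $\int_{\partial D_1}\mbox{Re}\{P_n\}\tilde\varphi_l\,d\sigma$ and $\int_{\partial D_1}\mbox{Im}\{P_n\}\tilde\varphi_l\,d\sigma$ become linear combinations of the four CGPT entries $M^{cc}_{m,n}(\lambda_{D_1},D_1)$, $M^{cs}_{m,n}(\lambda_{D_1},D_1)$, $M^{sc}_{m,n}(\lambda_{D_1},D_1)$, $M^{ss}_{m,n}(\lambda_{D_1},D_1)$ with coefficients $a^l_{m,c}, a^l_{m,s}$. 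Substituting into the integral over $\partial D_2$ and recognising that $-\tfrac{1}{2\pi n}\int_{\partial D_2}\cos(n\theta_x)r_x^{-n}\varphi_j\,d\sigma = a^j_{n,c}$ (and similarly for $\sin$), I get a double sum of the form $\sum_{m,n}a^j_n M_{m,n}(\lambda_{D_1},D_1)(a^l_m)^t$. A relabelling $m\leftrightarrow n$ combined with the symmetries $M^{cc}_{m,n}=M^{cc}_{n,m}$, $M^{ss}_{m,n}=M^{ss}_{n,m}$, $M^{cs}_{m,n}=M^{sc}_{n,m}$ of the CGPTs then rewrites this in the form stated in the proposition.

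The remaining task is the truncation error. Using $D_1=\delta B$ together with the scale-invariance of $\mathcal{K}^*_{D_1}$ one checks $M^{ab}_{m,n}(\lambda_{D_1},D_1) = \delta^{m+n}M^{ab}_{m,n}(\lambda_{D_1},B)$, while the coefficients $a^j_n$ and $a^l_m$ remain bounded uniformly in $\delta$. Combined with the uniform bound $\|\tilde\varphi_l\|_{\mathcal{H}^*(\partial D_1)} = O(\delta)$ from Proposition 3.1, truncating the two series at $m \leq M$ and $n \leq N$ yields the claimed remainder. The main obstacle I anticipate is not any individual step but coordinating the two multipole expansions on their respective domains of convergence, and aligning the symmetries of the CGPTs with the precise matrix form stated in the proposition; once that bookkeeping is in place, the identification of the CGPT matrix is essentially algebraic.
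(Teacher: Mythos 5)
Your proposal reproduces the paper's argument: Green's identity on $D_2$ to pull out the factor $(\tfrac{1}{2}-\lambda_j)$, multipole expansions of $G$ in the two complementary regimes $|x|<|y|$ and $|x|>|y|$, identification of the resulting boundary integrals on $\p D_1$ with the CGPT entries, and the scaling $M_{m,n}(\lambda_{D_1},D_1)=\delta^{m+n}M_{m,n}(\lambda_{D_1},B)$ for the truncation error. The only substantive detail to tighten is that for the remainder estimate the paper uses not just boundedness of the $a^j_m$ but their geometric decay ($|a^j_{m,c}|,|a^j_{m,s}|\leq C^{-m}/m$ with $C>1$), which is what keeps the truncated tail under control together with the $\delta^{m+n}$ scaling.
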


\begin{proof}
To simplify the notation, let us denote
$$
F_l=\mathcal{S}_{D_1}\left(\lambda_{D_1}Id - \mathcal{K}_{D_1}^*\right)^{-1}\df{\p \mathcal{S}_{D_2}[\varphi_l]}{\p \nu_1}.
$$
Then, from the Green's identity and the jump formula \eqref{eqn_jump_single2}, we obtain
\beas
R_{jl} &=& \big( F_l, \varphi_j \big)_{\mathcal{H^*}}
= -\big(\f{\p F_l}{\p \nu_2}, \mathcal{S}_{D_2}[\varphi_j] \big)_{\f{1}{2},-\f{1}{2}}
\\
&=& - \big(F_l, \f{\p \mathcal{S}_{D_2}[\varphi_j] }{\p \nu_2}\bigg|_{-}\big)_{\f{1}{2},-\f{1}{2}}
=- \big(F_l, (-\frac{1}{2} + \mathcal{K}_{D_2}^*)[\varphi_j]\big)_{\f{1}{2},-\f{1}{2}}.
\eeas
Since $\varphi_j$ is an eigenfunction of $\mathcal{K}_{D_2}^*$ with an eigenvalue $\lambda_j$, we have 
\beas
R_{jl}&=& \Big(\f{1}{2}-\lambda_j\Big) \big(F_l, \varphi_j \big)_{\f{1}{2},-\f{1}{2}}.
\eeas

Let $(r_x,\theta_x)$ be the polar coordinates of $x$. 
It is known from \cite{book3} that, for $|x|<|y|$, 
\be\label{eq-green GPT expansion1}
G({x},y) = \sum_{n=0}^{\infty} \f{(-1)}{2\pi n}  \f{\cos(n \theta_y)}{ r_y^n} r_{{x}}^n\cos(n \theta_{{x}}) + \f{(-1)}{2\pi n} \f{\sin(n \theta_y)}{ r_y^n} r_{{x}}^n\sin(n \theta_{{x}}). 
\ee
By interchanging $x$ and $y$ and the fact that $G(x,y)=G(y,x)$, we have, for $|x|>|y|$,
\be\label{eq-green GPT expansion2}
G({x},y) = \sum_{n=0}^{\infty} \f{(-1)}{2\pi n}  \f{\cos(n \theta_x)}{ r_x^n} r_{{y}}^n\cos(n \theta_{{y}}) + \f{(-1)}{2\pi n} \f{\sin(n \theta_x)}{ r_x^n} r_{{y}}^n\sin(n \theta_{{y}}). 
\ee

If $x\in \p D_1$ and $y\in \p D_2$, then $|x|<|y|$. So, applying \eqref{eq-green GPT expansion1} gives
\beas
 \df{\p \mathcal{S}_{D_2}[\varphi_l]}{\p \nu_1}({x}) &=& \df{\p}{\p \nu_1}\int_{\p D_2} G({x},y) \varphi_ld\sigma(y)\\
&=&  \sum_{n=1}^{\infty} \f{\p r_{{x}}^n\cos(n \theta_{{x}}) }{\p \nu_1}a_{n,c}^l + \f{\p r_{{x}}^n\sin(n\theta_{{x}}) }{\p \nu_1}a_{n,s}^l .
\eeas
On the contrary, if $y\in \p D_1$ and $x\in \p D_2$,  then $|x|>|y|$. We have from \eqref{eq-green GPT expansion2} that, for any $f$,
\beas
\mathcal{S}_{D_1}[f](x) &=& \int_{\p D_1} G(x, {y})[f]( {y}) d\sigma( {y})\\
&=& \sum_{m=0}^{\infty}-\f{1}{2\pi m} \f{\cos(m \theta_x)}{ r_x^m} \int_{\p D_1} r_{ {y}}^m\cos(m \theta_{ {y}}) [f]( {y})d\sigma( {y}) \\
&& +\sum_{m=0}^{\infty}-\f{1}{2\pi m} \f{\sin(m \theta_x)}{ r_x^m} \int_{\p D_1} r_{ {y}}^m\sin(m \theta_{ {y}}) [f]( {y})d\sigma( {y}).
\eeas
Therefore, from the definition of $M_{m,n}$, we get
\beas
R_{jl} &=&   \left(\f{1}{2}-\lambda_j\right) \big(\mathcal{S}_{D_1}\left(\lambda_{D_1}Id - \mathcal{K}_{D_1}^*\right)^{-1}\df{\p \mathcal{S}_{D_2}[\varphi_l]}{\p \nu_1}, \varphi_j \big)_{\f{1}{2},-\f{1}{2}}\\
 &=& \left(\f{1}{2}-\lambda_j\right)\sum_{m= 0, n = 1}^{\infty} (a_{m,c}^j,a_{m,s}^j) M_{m,n}(\lambda_{D_1},D_1)(a_{n,c}^l,a_{n,s}^l)^t .
\eeas
For any $\lambda \in \mathbb{C}$ and $D=\delta B$, it is easy to check that $M_{m,n}(\lambda,D) = \delta^{m+n}M_{m,n}(\lambda,B)$. 
Since $D_2$ is in the intermediate regime, $a_{n,c}^l$ and $a_{n,s}^l$ satisfy
$$
|a_{m,c}^j|, |a_{m,s}^j| \leq \frac{1}{m}C^{-m}, \quad 
|a_{n,c}^l|, |a_{n,s}^l| \leq \frac{1}{n}C^{-n},
$$
for some constant $C>1$ independent of $\delta$.
Moreover, it can be shown that (see \cite{GPTs})
$$\sum_{n = 1}^{\infty} (a_{0,c}^j,a_{0,s}^j) M_{0,n}(\lambda_{D_1}, D_1)(a_{n,c}^l,a_{n,s}^l)^t = 0.$$
Then the conclusion immediately follows.
\end{proof}

\begin{cor}\label{cor_Pj_CGPT}
We have
\begin{align*}
\mathcal{P}_{j}(z) & -\sum_{l\neq j}\f{R_{jl}(z)R_{lj}(z)}{\lambda_j-\lambda_l} - 
\sum_{(l_1,l_2)\neq j}\f{R_{jl_2}R_{l_2l_1}R_{l_1j}}{(\lambda_j-\lambda_{l_1})(\lambda_j-\lambda_{l_2})}
\dots
\\
&  =
\left(\f{1}{2}-\lambda_j\right)\sum_{m= 1}^{M} \sum_{ n = 1}^{N} a_{m}^j M_{m,n}(\lambda_{D_1},D_1)(a_{n}^l)^t+ O(\delta^{M+N+1}).
\end{align*}
In the LHS, the summation should be truncated 
so that all the terms which contain
$R_{j l_k}\cdots R_{l_k j}=O(\delta^{2(k+1)})$
with $2(k+1)\leq M+N+1$ are ignored.
\end{cor}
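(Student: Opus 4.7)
The plan is to unwind the perturbation-theoretic series for $\mathcal{P}_j$ given in \eqref{Pj_expansion} and then feed the CGPT identity from Proposition \ref{prop-GPT perturbation} into the remaining diagonal term.

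First I would take the expansion \eqref{Pj_expansion} and transpose all but the leading summand to the left-hand side. After this rearrangement one is left with exactly $R_{jj}(z)$, plus an infinite tail consisting of those $R$-monomials of length $k+1$ that have been dropped from the truncation. Proposition \ref{prop-intermediate-AD21-small} gives $\|\mathcal{A}_{D_2,1}\|_{\mathcal{H}^*(\partial D_2)}=O(\delta^2)$, and in particular each matrix element satisfies $R_{jl}(z)=O(\delta^2)$ uniformly in $j,l$ for $z$ in the intermediate regime. Consequently a product of $k+1$ such factors is $O(\delta^{2(k+1)})$, and retaining only those terms with $2(k+1)\le M+N$ (i.e.\ truncating at $k$ such that $2(k+1)\le M+N+1$, as the corollary instructs) discards exactly the contributions that are $O(\delta^{M+N+1})$.

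The second step is to invoke Proposition \ref{prop-GPT perturbation} with $l=j$ on the isolated term $R_{jj}(z)$. That proposition directly provides
\[
R_{jj}(z) \;=\; \Bigl(\tfrac{1}{2}-\lambda_j\Bigr)\sum_{m=1}^{M}\sum_{n=1}^{N} a_{m}^{j}\,M_{m,n}(\lambda_{D_1},D_1)\,(a_{n}^{j})^{t} \;+\; O(\delta^{M+N+1}),
\]
which is precisely the right-hand side of the corollary. Combining the two $O(\delta^{M+N+1})$ remainders (the perturbation-tail bound and the CGPT-truncation bound) into a single error gives the claim.

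The only real point of vigilance is the bookkeeping of orders: one has to verify that the cutoff prescribed for the left-hand side (drop every $R$-monomial of length exceeding $(M+N+1)/2$) and the truncation of the CGPT double sum on the right-hand side (keep $1\le m\le M$, $1\le n\le N$) assemble into a \emph{single} uniform bound of order $\delta^{M+N+1}$; this needs the scaling identity $M_{m,n}(\lambda,\delta B)=\delta^{m+n}M_{m,n}(\lambda,B)$ and the geometric decay estimates $|a_{m,c}^j|,|a_{m,s}^j|\le \tfrac{1}{m}C^{-m}$ already used in the proof of Proposition \ref{prop-GPT perturbation}. Beyond this combinatorial reindexing the corollary is immediate, since all of the analytic content has been carried out in Propositions \ref{prop-intermediate-AD21-small} and \ref{prop-GPT perturbation}.
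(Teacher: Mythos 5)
Your proof is correct and follows the only natural route: rearrange the perturbation series \eqref{Pj_expansion} to isolate $R_{jj}$, substitute the CGPT expansion of $R_{jj}$ from Proposition~\ref{prop-GPT perturbation} (with $l=j$, tacitly correcting the paper's $(a_n^l)^t$ typo to $(a_n^j)^t$), and absorb both the dropped tail of the perturbation series and the truncation error of the CGPT sum into a single $O(\delta^{M+N+1})$ remainder. The paper states this corollary without a separate proof, and your argument supplies exactly the omitted reasoning; the only thing you glide over, at the same level of informality as the paper, is that the uniform estimate $R_{jl}=O(\delta^2)$ must be upgraded to an $\ell^2$-type bound in $l$ (e.g.\ via $\sum_l|R_{jl}|^2\le\|\mathcal{A}_{D_2,1}\|^2$) so that the infinite sums over $l$, $(l_1,l_2),\dots$ in the perturbation series actually converge and retain the claimed $O(\delta^{2(k+1)})$ size.
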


\section{The inverse problem} \label{sec-inverse}
%As ilustrated by Theorem \ref{thm1 biosensing}, the plasmonic resonance occur when $\lambda_{D_2} - \lambda_j +  \mathcal{P}_j$ is minimized. In this section we search to recover geometrical properties of $D_1$ from the measurements of $P_j$ at diferents values of $D_2$'s position, $z$.

In this section, we consider the inverse problem associated with the forward system (\ref{eq-Helmholtz eq biosensing}). We assume that the plasmonic particle $D_2$ is known, {\it i.e.}, we know its electric permittivity $\eps_2=\eps_2(\omega)$, its shape $D_2$ and position $z$. The ordinary particle $D_1$ is unknown. For simplicity, we assume that its permittivity $\eps_1$ is known.  For each of many different positions $z$ of the plasmonic particle $D_2$, we measure the resonant frequency and use these resonant frequencies to reconstruct the shape of the ordinary particle $D_1$. 

As illustrated by Theorem \ref{thm-far field sensing}, the resonance in the scattered field occurs when $\lambda_{D_2}(\omega) - \lambda_j +  \mathcal{P}_j$ is minimized and $(\nu_l,\varphi_j)_{\mathcal{H}^*} (\varphi_j,x_m)_{-\f{1}{2},\f{1}{2}} \neq 0$. 
So by varying the frequency $\omega$, we can measure the value of $\lambda_j - \mathcal{P}_j$.
Moreover, in the absence of the ordinary particle, the resonance occurs when $\lambda_{D_2}(\omega) - \lambda_j$ is minimized and $(\nu_l,\varphi_j)_{\mathcal{H}^*} (\varphi_j,x_m)_{-\f{1}{2},\f{1}{2}} \neq 0$. Since we assume that the plasmonic particle $D_2$ is known, we can get the value of $\lambda_j$ a priori.
Therefore, by comparing $\lambda_j-\mathcal{P}_j$ and $\lambda_j$, we can measure the shift $\mathcal{P}_j$ of the eigenvalue. 

Finding $\mathcal{P}_j$ for many different positions of $D_2$ will yield a linear system of equations that will allow the recovery of the CGPTs associated with $D_1$.
From the recovered CGPTs, we will reconstruct the ordinary particle $D_1$. Here, we only consider the shape reconstruction problem. Nevertheless, by using the CGPTs associated with $D_1$, it is possible to reconstruct the permittivity $\eps_1$ of $D_1$ in the case it is not a priori given \cite{book3}. 

From now on, we denote $M_{m,n} = M_{m,n}(\lambda_{D_1},D_1)$.

\subsection{Contracted GPTs recovery algorithm}\label{algo-sensing}
We propose a recurrent algorithm to recover the GPTs of order less or equal to $k$ up to an order $\delta^{2k-1}$, using measurements of $P_j$ at different positions of $D_2$. For simplicity, we only consider the shift of a single eigenvalue $\lambda_j$ with a fixed $j$.
To gain robustness and efficiency, the shift in other resonant frequencies could also be considered.

We now explain our method for reconstructing GPTs $M_{m,n}, m+n\leq K$ for a given $K\in\mathbb{N}$ from the measurements of the shift $\mathcal{P}_j$.

Suppose we measure precisely $\mathcal{P}_j$  for three different positions $z_1,z_2,z_3$ of the plasmonic particle $D_2$.
First we reconstruct $M_{1,1}$ approximately. Since $M_{1,1}^t=M_{1,1}$, the matrix $M_{1,1}$ is symmetric. We look for a symmetric matrix $M_{1,1}^{(2)}$ satisfying
\beas
\mathcal{P}_{j}(z_1) &=&   \left(\f{1}{2}-\lambda_j\right) a_{1}^j(z_1) M_{1,1}^{(2)}(a_{1}^j)^t(z_1)\\
\mathcal{P}_{j}(z_2) &=&   \left(\f{1}{2}-\lambda_j\right) a_{1}^j(z_2) M_{1,1}^{(2)}(a_{1}^j)^t(z_2)\\
\mathcal{P}_{j}(z_{3}) &=&   \left(\f{1}{2}-\lambda_j\right) a_{1}^j(z_{3}) M_{1,1}^{(2)}(a_{1}^j)^t(z_{3}).
\eeas
The above equations can be seen as a linear system of equations for three independent components $(M_{1,1}^{(2)})_{11}, (M_{1,1}^{(2)})_{12}$ and $(M_{1,1}^{(2)})_{22}$.
We emphasize that $a_m^j(z_i)$ can be a priori given because the particle $D_2$ is known.
Since, from Corollary \ref{cor_Pj_CGPT} and the fact that $R_{jl}=O(\delta^2)$, we have
$$
\mathcal{P}_j(z_k) =  \left(\f{1}{2}-\lambda_j\right) a_{1}^j(z_k) M_{1,1}(a_{1}^j)^t(z_k) + O(\delta^3), \quad k=1,2,3,
$$
we see that $M_{1,1}$ is well approximated by $M_{1,1}^{(2)}$. Specifically, we have
$M_{1,1} - M_{1,1}^{(2)} = O(\delta^3)$.

Next we reconstruct and update the higher order GPTs $M_{n,m}$ in a recursive way. 
Towards this, we need more measurement data of the shift $\mathcal{P}_j$.
Let $k\geq 3$. 
Due to the symmetry of harmonic combinations of the non contracted GPTs (see \cite{book2}), we have $M_{m,n} = M_{n,m}^t$.
One can see that, by using this symmetry property, the set of GPTs $M_{m,n}$ satisfying ${m+n\leq k}$ contains $e_k$ independent variables where $e_k$ is given by
\beas
e_k = \left\{ \begin{array}{l} 
k(k-1) + k/2,\quad \mbox{ if } k \mbox{ is even},\\
k(k-1) + (k-1)/2,\quad \mbox{ if } k \mbox{ is odd}.
\end{array}\right.
\eeas
Therefore, we need $e_k$ measurement data for $\mathcal{P}_j$ to reconstruct the GPTs $M_{m,n}$ for $m+n\leq k$.

Suppose we have $e_k-2$ more measurement data $\mathcal{P}_j$ at different positions $z_4,z_5,...,z_{e_k}$. 
 Let $\{M_{m,n}^{(k)}\}_{m+n \leq k }$ be the set of matrices satisfying $[M^{(k)}_{n,m}]^t=M^{(k)}_{m,n}$ and the following linear system:
\begin{align}
\widetilde{\mathcal{P}}^{(k-1)}_{j}(z_1) &=   \left(\f{1}{2}-\lambda_j\right)\sum_{m + n \leq k} a_{m}^j(z_1) M_{m,n}^{(k)}(a_{n}^j)^t(z_1)
\notag
\\
\widetilde{\mathcal{P}}^{(k-1)}_{j}(z_2) &=   \left(\f{1}{2}-\lambda_j\right)\sum_{m + n \leq k} a_{m}^j(z_2) M_{m,n}^{(k)}(a_{n}^j)^t(z_2)
\notag
\\
\vdots \quad \quad &=\qquad\qquad\qquad\qquad\vdots 
\notag
\\
\widetilde{\mathcal{P}}^{(k-1)}_{j}(z_{e_k}) &=   \left(\f{1}{2}-\lambda_j\right)\sum_{m + n \leq k} a_{m}^j(z_{e_k}) M_{m,n}^{(k)}(a_{n}^j)^t(z_{e_k}),
\label{Pjk_linear_system}
\end{align}
where
\be\label{def_Pjk}
\widetilde{\mathcal{P}}^
{(k-1)}_{j}(z_i):=\mathcal{P}_{j}(z_i)-\sum_{l\neq j}\f{R_{jl}^{(k-1)}(z_i)R_{lj}^{(k-1)}(z_i)}{\lambda_j-\lambda_l} - \dots, \quad i=1,2,...,e_k,
\ee
and
\beas
R_{jl}^{(k-1)}(z) := \left(\f{1}{2}-\lambda_j\right)\sum_{ m + n \leq k-1} a_{m}^j(z) M_{m,n}^{(k-1)}(a_{n}^l)^t(z).
\eeas
Note that $M_{m,n}^{(k)}$ are defined recursively. In \eqref{def_Pjk}, the summation should be truncated as in Corollary \ref{cor_Pj_CGPT}.

%Recall that $M_{m,n}(\lambda,D) = \delta^{m+n}M_{m,n}(\lambda,B)$ when $D = \delta B$.
Then $M_{m,n}^{(k)}$ becomes a good approximation of the GPT $M_{m,n}$ for $m+n\leq k$. Moreover, the accuracy improves as the iteration goes on. Indeed, we can see that
\be\label{claim_Mmnk}
M_{m,n} - M_{m,n}^{(k)} = O(\delta^{2k-1}),\quad m+n\leq k.
\ee
In fact, \eqref{claim_Mmnk} can be verified by induction. 
We already know that this is true when $k=2$. Let us assume 
$M_{m,n} - M_{m,n}^{(k-1)} = O(\delta^{2k-3})$, $m+n \leq k-1$.
Then, from Proposition \ref{prop-GPT perturbation}, we have
\beas
R_{jl}(z) - R_{jl}^{(k-1)}(z) &=& O(\delta^{2k-3}).
\eeas
Hence, from Corollary \ref{cor_Pj_CGPT} and the fact that $R_{jl}=O(\delta^2)$, we obtain
$$
\widetilde{\mathcal{P}}^
{(k-1)}_{j}(z_i)-\left(\mathcal{P}_{j}(z_i)-\sum_{l\neq j}\f{R_{jl}(z_i)R_{lj}(z_i)}{\lambda_j-\lambda_l} - \cdots\right) = O(\delta^{2k-1}).
$$
Therefore, in view of Corollary \ref{cor_Pj_CGPT} and the linear system \eqref{Pjk_linear_system}, we obtain \eqref{claim_Mmnk}. 
In conclusion, $M_{m,n}^{(k)}$ is indeed precise up to an order $\delta^{2k-1}$.

\begin{rmk} In practice, $\mathcal{P}_j$ might be subject to noise and could not be measured precisely. In this case only the low order CGPTs could be recovered.
\end{rmk}
%\underline{k-th iteration} : 
%We have that
%\beas
%\mathcal{P}_{j} = R_{jj}^{(k)} + \sum_{l\neq j}\f{R_{jl}^{(k)}R_{lj}^{(k)}}{\lambda_j-\lambda_l} + \sum_{(l_1,l_2)\neq j}\f{R_{jl_2}^{(k)}R_{l_2l_1}^{(k)}R_{l_1j}^{(k)}}{(\lambda_j-\lambda_{l_1})(\lambda_j-\lambda_{l_2})} + \dots,
%\eeas
% $M^{(k)}$, $m+n \leq k$ such that

\subsection{Shape recovery from contracted GPTs}
To recover the shape of $D_1$ from its contracted GPTs, we search to minimize the following shape functional (\cite{book3})
\be \label{min2c}
 \mathcal{J}_c^{(l)}[B]:= \frac{1}{2} \sum_{n+ m \le k}
 \left| N^{(1)}_{mn} (\lambda_{D_1}, B) -  N^{(1)}_{mn} (\lambda_{D_1}, D_1)
 \right|^2 \;,
\ee
where
\beas
N^{(1)}_{m,n} (\lambda, D) = (M_{m,n}^{cc} - M_{m,n}^{ss}) + i (M_{m,n}^{cs}-M_{m,n}^{sc} ).
\eeas
To minimize $\mathcal{J}^{(l)}[B]$ we need to compute the
\index{shape derivative} shape derivative, $d_S
\mathcal{J}_c^{(l)}$, of $\mathcal{J}_c^{(l)}$.

For $\epsilon$ small, let
$B_\epsilon$ be an $\epsilon$-deformation of $B$, {\it i.e.}, there is a 
scalar function $h \in \mathcal{C}^{1}(\p B)$, such that
 \beas
 \partial B_{\epsilon}:=\{x+\epsilon h(x)\nu(x)\ :  x \in \p B\} . 
 \eeas
Then, according to
\cite{book3,gpt1,gpt2}, the perturbation of a harmonic sum of GPTs due to
the shape deformation is given as follows:
\beas
N^{(1)}_{m,n} (\lambda_{D_1}, B_{\epsilon}) -  N^{(1)}_{m,n} (\lambda_{D_1}, D_1) \\
\nm 
\ds =  \epsilon (k_{\lambda_{D_1}}-1) \int_{ \p B} h(x)\left[\f{\p u}{\p \nu}\Big|_{-} \f{\p v}{\p \nu}\Big|_{-}
 +\frac{1}{k_{\lambda_{D_1}}} \f
 {\p u}{\p T}\Big|_{-}\f{\p v}{\p T}\Big|_{-}\right](x)\, d\sigma(x)+O(\epsilon^2),
\eeas
where \begin{equation} \label{kpbd} k_{\lambda_{D_1}} = (2\lambda_{D_1} +1)/
(2\lambda_{D_1} -1), \end{equation} and $u$ and $v$ are respectively
the solutions to the problems:
 \begin{equation}\label{u}
 \left\{
  \begin{array}{ll}
 \Delta u =0 \quad & \mbox{in } \ds B\cup (\R^2 \backslash \overline{B})\;,\\
 \nm
 \ds u|_{+} -u|_{-} =0 \quad &\mbox{on } \p B\;,\\
 \nm
 \ds \f{\p u}{\p \nu}\Big|_{+} -k_{\lambda_{D_1}} \f{\p u}{\p \nu}\Big|_{-} =0 \quad &\mbox{on } \p B\;,\\
 \nm
 \ds (u-(x_1 + ix_2)^m)(x)=O(|x|^{-1}) \quad &\mbox{as } |x|\rightarrow \infty\;,
 \end{array}
\right.
\end{equation}
and
\begin{equation}\label{v}
\left\{
  \begin{array}{ll}
 \Delta v =0 \quad & \mbox{in } \ds B\cup (\R^2 \backslash \overline{B})\;,\\
 \nm
 \ds k_{\lambda_{D_1}} v|_{+} -v|_{-} =0 \quad &\mbox{on } \p B\;,\\
 \nm
 \ds \f{\p v}{\p \nu}\Big|_{+} -\f{\p v}{\p \nu}\Big|_{-} =0 \quad &\mbox{on } \p B\;,\\
 \nm
 \ds (v-(x_1 + ix_2)^n)(x)=O(|x|^{-1}) \quad &\mbox{as } |x|\rightarrow \infty\;.
 \end{array}
\right.
\end{equation}
Here, $\partial /\partial T$ is the tangential derivative. 

 Let
\beas
w_{m,n}(x) = (k_{\lambda_{D_1}}-1) \left[\f{\p u}{\p \nu}\Big|_{-}
\f{\p v}{\p \nu}\Big|_{-}+\frac{1}{k_{\lambda_{D_1}}}
\f{\p u}{\p T}\Big|_{-}\f{\p v}{\p T}\Big|_{-}\right](x), \quad x \in \p
B\;.
\eeas
The shape derivative of $\mathcal{J}_c^{(l)}$ at $B$ in the direction of $h$
is given by
 \beas
 \langle d_S \mathcal{J}_c^{(l)}[B], h \rangle  = \sum_{m+n \leq k} \delta_{N}
 \langle w_{m,n}, h \rangle_{L^2(\partial B)} \;,
 \eeas
where
 $$
 \delta_{N} = N^{(1)}_{m,n} (\lambda_{D_1}, B) -  N^{(1)}_{m,n} (\lambda_{D_1}, D_1)\;.
 $$
Next, using a gradient descent algorithm we can minimize, at least locally, the functional $\mathcal{J}_c^{(l)}$. 

\section{Numerical Illustrations}  \label{sec-numeric}
In this section, we support our theoretical results by numerical examples. 
In the sequel, we assume that $D_2$ is an ellipse with semi-axes $a  = 1$ and $b = 2$, as shown in Figure \ref{fig-plasmonic ellipse}. In this case, as explained in Subsection \ref {subsec-CGPT}, the resonances in the far-field can only occur at $\lambda_1 = \f{1}{2}\f{a-b}{a+b} = -\f{1}{6}$ and $\lambda_2 = -\f{1}{2}\f{a-b}{a+b} = \f{1}{6}$. Thus, for a fixed position of $D_2$, we can measure two shifts of the plasmonic resonance: $\mathcal{P}_1$ and $\mathcal{P}_2$.
\begin{figure}[h]
\begin{center}
\includegraphics[scale=0.5]{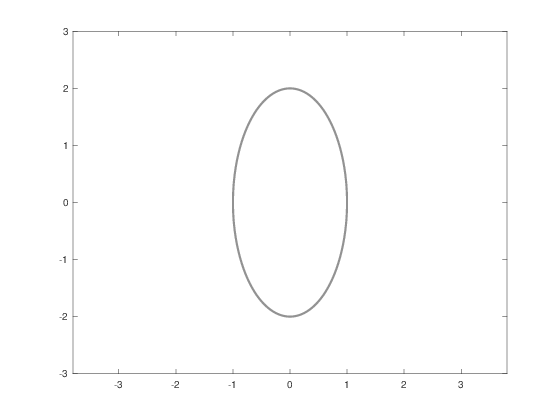}
\caption{ \label{fig-plasmonic ellipse} Plasmonic partcile $D_2$. }
\end{center}
\end{figure}

We consider the case of $D_1$ being a triangular-shaped and a rectangular-shaped particle with known contrast $\lambda_{D_1} = 1$, as shown in Figure \ref{fig-non plasmonic}.
\begin{figure}[h]
\begin{center}
\includegraphics[scale=0.4]{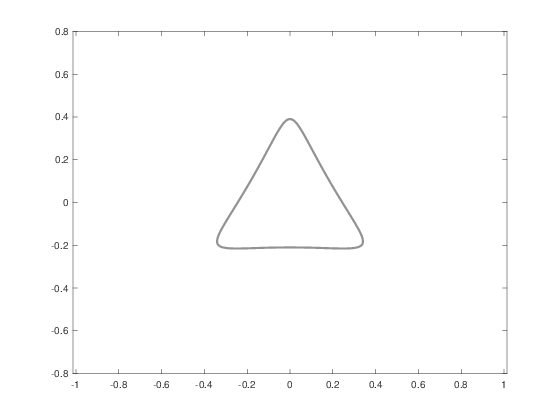}
\includegraphics[scale=0.4]{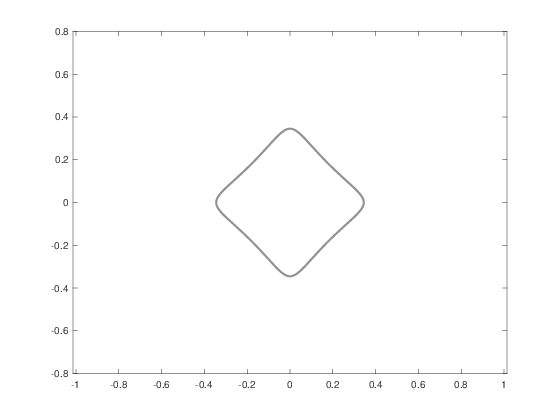}
\caption{ \label{fig-non plasmonic} Non plasmonic partciles $D_1$. Triangular-shaped (left) and rectangular-shaped (right).}
\end{center}
\end{figure}

Figure \ref{fig-shift in resonance} shows the shift in the plasmonic resonance around $\lambda_1$, for random positions of $D_2$ around a triangular-shaped particle $D_1$. From these measurements, $\mathcal{P}_1$ can be precisely estimated from the resonance peaks and the equation $\mathcal{P}_j = \lambda_j - \lambda_r$, where $\lambda_r$ is the value at which we achieve the maximum of the resonant peak. 

It is worth  mentioning that, for the sake of simplicity and clarity, we plot the graph not by varying the frequency but the parameter $\lambda$ directly. We assume $\mbox{Re}(\lambda_{D_2})$ ranges from $-1/2$ to $1/2$ and $\mbox{Im}(\lambda_{D_2}) =  10^{-4}$. In a more realistic setting, corrections in the peaks of resonances should be included, by considering the Drude model for $\lambda_{D_2}$. But they are essentially equivalent.
\begin{figure}[h]
\begin{center}
\includegraphics[scale=0.7]{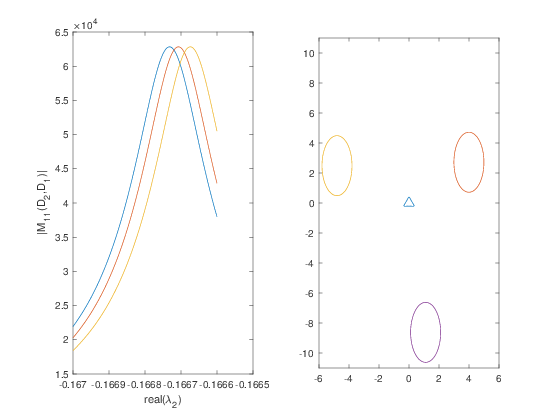}
\caption{ \label{fig-shift in resonance} (right) Modulus of the entry (1,1) of the first order polarization tensor given in Theorem \ref{thm-far field sensing}, for different positions of $D_2$ around a triangular-shaped particle $D_2$ (left). }
\end{center}
\end{figure}

To recover geometrical properties of $D_1$ from measurements of $\mathcal{P}_1$, we recover the contracted GPTs using the algorithm described in $\ref{algo-sensing}$ and then minimize functional \eqref{min2c} to reconstruct an approximation of $D_1$.

To recover the first contracted GPTs of order 5 or less we make 22 measurements around $D_1$ as shown in Figure \ref{fig-22 measurements},  and measure the shift from $\lambda_1 = -\f{1}{6}$.
\begin{figure}[h]
\begin{center}
\includegraphics[scale=0.4]{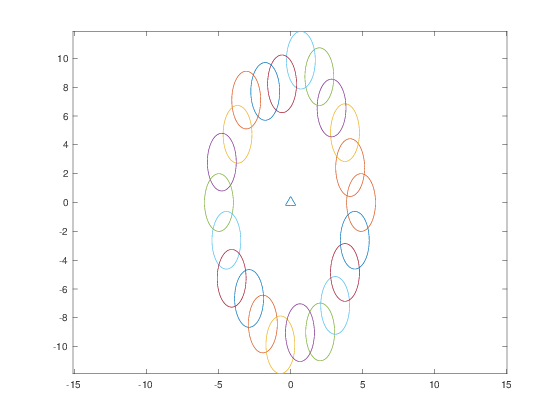}
\includegraphics[scale=0.4]{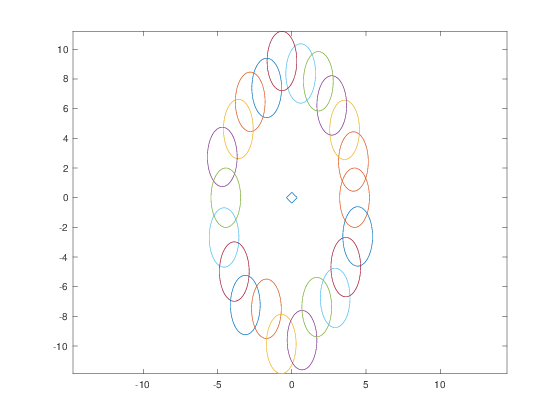}
\caption{ \label{fig-22 measurements} Positions of $D_2$ for which we measure $\mathcal{P}_1$. (left) Triangular-shaped particle $D_1$, (right) rectangular-shaped particle $D_1$.}
\end{center}
\end{figure}

In the following, we show a comparison between the recovered contracted GPTs of order less or equal to 4 and their theoretical values, for each iteration.
\\ 

\underline{Triangle-shaped $D_1$}:
\\ \\
Theoretical values:
\beas
&& M_{11}= \left(\begin{array}{c c}
0.2426 &  0\\
   0  &  0.2426
\end{array}\right) \quad M_{12}= \left(\begin{array}{c c}
0  &  -0.0215\\
   -0.0215  &  0
\end{array}\right)\quad M_{22}= \left(\begin{array}{c c}
0.043 &  0\\
   0  &  0.043
\end{array}\right)\\
&& M_{13}= \left(\begin{array}{c c}
0 &  0\\
   0  &  0
\end{array}\right);
\eeas
Recovered:
\beas
&& M_{11}^{(2)} = \left(\begin{array}{c c}
0.2444 &  -0.0007\\
   -0.0007  &  0.2408
\end{array}\right)\quad    M_{11}^{(3)} = \left(\begin{array}{c c}
0.2438  & 0  \\
   0 &  0.2414
\end{array}\right) \quad  M_{11}^{(4)} = \left(\begin{array}{c c}
0.2429 &  -0.0001\\
 -0.0001 & 0.2430
\end{array}\right) \\
&& M_{11}^{(5)} = \left(\begin{array}{c c}
0.2426 &  0\\
   0  &  0.2426
\end{array}\right)
\eeas
\beas
M_{12}^{(3)} = \left(\begin{array}{c c}
   0.0008  &  -0.2414  \\
    -0.0212  &  -0.0087
\end{array}\right)\quad    M_{12}^{(4)} = \left(\begin{array}{c c}
   0 &  -0.2413  \\
    -0.0213  &  0
\end{array}\right) \quad  M_{12}^{(5)} = \left(\begin{array}{c c}
   0 &  -0.2415  \\
    -0.0215  &  0
\end{array}\right) 
\eeas
\beas
&& M_{22}^{(4)} = \left(\begin{array}{c c}
0.0180  &  0.2204\\
   0.2204  &  0.0389 
\end{array}\right)\quad    M_{22}^{(5)} = \left(\begin{array}{c c}
0.0368  &  0.0010\\
   0.0010  &  0.0497
\end{array}\right) \quad  M_{13}^{(4)} = \left(\begin{array}{c c}
0.0093 &  -0.1126\\
   -0.1123 &  -0.0019
\end{array}\right)\\
 && M_{13}^{(5)} = \left(\begin{array}{c c}
0.0032 &  -0.0005\\
-0.0005  & -0.0032
\end{array}\right).
\eeas

\underline{Rectangular-shaped $D_1$}:
\\ \\
Theoretical values:
\beas
&& M_{11}= \left(\begin{array}{c c}
0.2682  &  0.0000\\
   0  &  0.2682
\end{array}\right) \quad M_{12}= \left(\begin{array}{c c}
0  &  0\\
   0  &  0
\end{array}\right)\quad M_{22}= \left(\begin{array}{c c}
0.0544 &  0\\
   0  &  0.0402
\end{array}\right)\\
&& M_{13}= \left(\begin{array}{c c}
0.0054 &  0\\
   0  &  -0.0054
\end{array}\right);
\eeas
Recovered:
\beas
&& M_{11}^{(2)} = \left(\begin{array}{c c}
    0.2703  &  0.0001\\
    0.0001  &  0.2661
\end{array}\right)\quad    M_{11}^{(3)} = \left(\begin{array}{c c}
0.2696  &  0  \\
0  &  0.2662
\end{array}\right) \quad  M_{11}^{(4)} = \left(\begin{array}{c c}
0.2682 & 0\\
0 & 0.2681
\end{array}\right) \\
&& M_{11}^{(5)} = \left(\begin{array}{c c}
0.2682  &  0\\
0  &  0.2681
\end{array}\right)
\eeas
\beas
M_{12}^{(3)} = \left(\begin{array}{c c}
   0.0038 &  -0.0001  \\
   0  & -0.0112
\end{array}\right)\quad    M_{12}^{(4)} = \left(\begin{array}{c c}
   0 &  0  \\
    0  &  0
\end{array}\right) \quad  M_{12}^{(5)} = \left(\begin{array}{c c}
   0 &  0  \\
    0  &  0
\end{array}\right) 
\eeas
\beas
&& M_{22}^{(4)} = \left(\begin{array}{c c}
0.0530  &  -0.0007\\
-0.0007  &  0.0425 
\end{array}\right)\quad    M_{22}^{(5)} = \left(\begin{array}{c c}
0.0537  &  0.0006\\
0.0006  &  0.0416
\end{array}\right) \quad  M_{13}^{(4)} = \left(\begin{array}{c c}
0.0064  &  0.0003\\
0.0004 &  -0.0063
\end{array}\right)\\
 && M_{13}^{(5)} = \left(\begin{array}{c c}
0.0060  & -0.0003\\
-0.0003  & -0.0059
\end{array}\right).
\eeas

The results of minimizing the functional \eqref{min2c} with a gradient descent approach and using the recovered contracted GPTs of order less or equal to 5 are shown in Figures \ref{fig-shape recovery triangle} and \ref{fig-shape recovery rectanle}. We take as initial point the equivalent ellipse to $D_1$, given by the first order polarization recovered with Algorithm \ref{algo-sensing}, i.e $M_{11}^{(5)}$.
\begin{figure}[h]
\begin{center}
\includegraphics[scale=0.25]{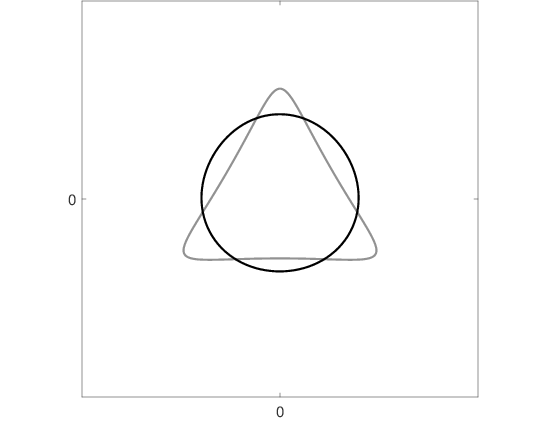}
\includegraphics[scale=0.25]{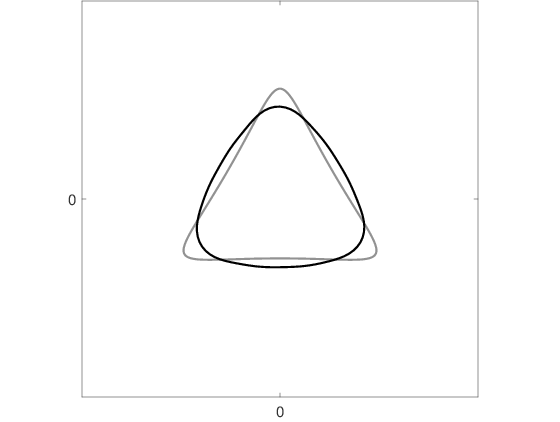}
\includegraphics[scale=0.25]{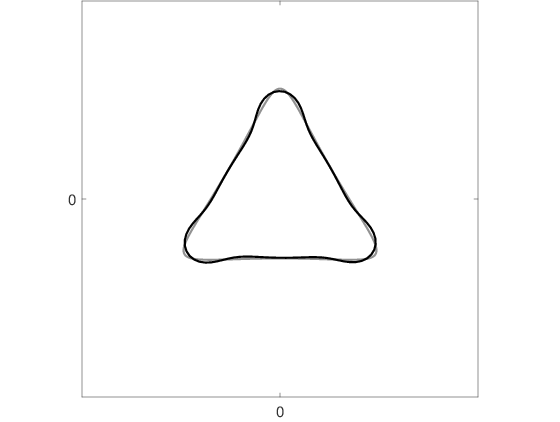}
\caption{ \label{fig-shape recovery triangle} Shape recovery of a triangular-shaped particle $D_1$. From left to right, we show both, the original shape and the recovered one after 0 iterations, after 8 iterations and after 30 iterations.}
\end{center}
\end{figure}

\begin{figure}[h]
\begin{center}
\includegraphics[scale=0.25]{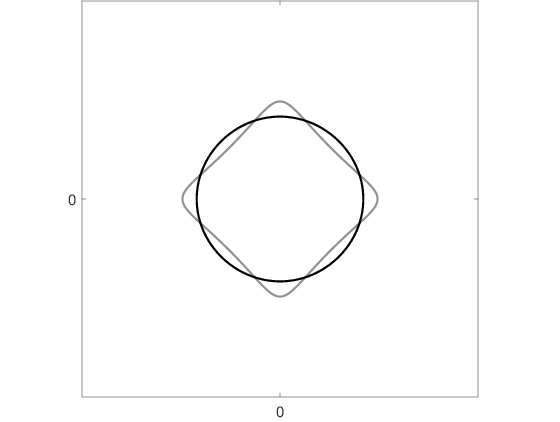}
\includegraphics[scale=0.25]{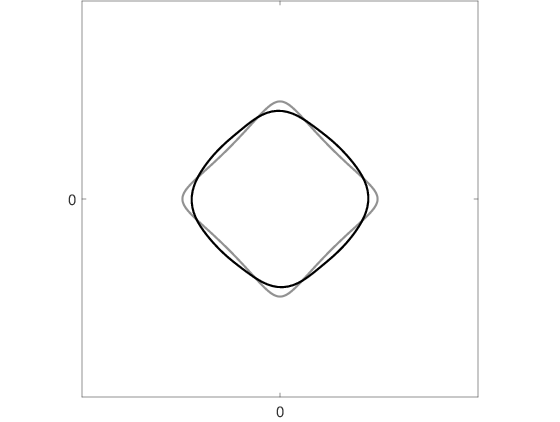}
\includegraphics[scale=0.25]{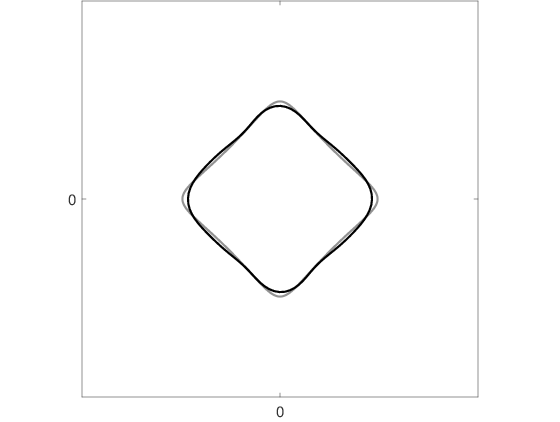}
\caption{ \label{fig-shape recovery rectanle} Shape recovery of a rectangular-shaped particle $D_1$. From left to right, we show both, the original shape and the recovered one after 0 iterations, after 30 iterations and after 100 iterations.}
\end{center}
\end{figure}
\section{Conclusion} \label{sec-conclusion}
In this paper, using the quasi-static model, we have shown that the fine details of a small object can be reconstructed from the shift of resonant frequencies it induces to a plasmonic particle in the intermediate regime. This provides a solution for the ill-posed inverse problem of reconstructing small objects from far-field measurements and also laid a mathematical foundation for plasmonic bio-sensing.  The idea can be extended in several directions: (i) to investigate the strong interaction regime when the small object is close to the plasmonic particle; (ii) to study the case when the size of object is comparable to the size of plasmonic particle; (iii) to analyze the case with multiple small objects and multiple plasmonic particles; (iv) to consider the more practical model of Maxwell equations, and (v) to investigate other types of subwavelength resonances such as Minnaert resonance \cite{Ammari2016_Minnaert, Minnaert1933} in bubbly fluids. These new developments will be reported in forthcoming works.

\end{document}